\def\UrlSpecials{\do\~{\kern -.15em\lower .7ex\hbox{~}\kern .04em}} \catcode`~=13 
\newcommand{\tnorm}[1]{{\left\vert\kern-0.25ex\left\vert\kern-0.25ex\left\vert #1 
    \right\vert\kern-0.25ex\right\vert\kern-0.25ex\right\vert}}
\newcommand{\tnormt}[1]{{\vert\kern-0.25ex\vert\kern-0.25ex\vert #1 
    \vert\kern-0.25ex\vert\kern-0.25ex\vert}}
\newcommand{\andd}{\;\; \mbox{and}\;\;}
\newcommand{\ipt}{\lranglet}
\newcommand{\clconv}{\mathsf{clconv}\,}
\newcommand{\norm}[1]{\left\Vert#1\right\Vert}
\newcommand{\normt}[1]{\Vert#1\Vert}
\newcommand{\abst}[1]{\vert#1\vert}
\newcommand{\nn}{\nonumber}
\newcommand{\cl}{\mathsf{cl}\,}
\newcommand{\ran}{\mathsf{ran}\,}
\newcommand{\conv}{\mathsf{conv}\,}
\newcommand{\calA}{\mathcal{A}}
\newcommand{\calB}{\mathcal{B}}
\newcommand{\calC}{\mathcal{C}}
\newcommand{\calD}{\mathcal{D}}
\newcommand{\calE}{\mathcal{E}}
\newcommand{\calH}{\mathcal{H}}
\newcommand{\calK}{\mathcal{K}}
\newcommand{\calO}{\mathcal{O}}
\newcommand{\calP}{\mathcal{P}}
\newcommand{\calQ}{\mathcal{Q}}
\newcommand{\calS}{\mathcal{S}}
\newcommand{\bbR}{\mathbb{R}}
\newcommand{\bbS}{\mathbb{S}}
\newcommand{\bbV}{\mathbb{V}}
\newcommand{\bbW}{\mathbb{W}}
\DeclareMathAlphabet{\mathbsf}{OT1}{cmss}{bx}{n}
\newcommand{\lranglet}[2]{\langle{#1},{#2}\rangle}
\DeclareMathOperator{\st}{\;\;s.t.\;\;}
\newtheorem{theorem}{Theorem} 
\newtheorem{lemma}{Lemma}
\newtheorem{prop}{Proposition}
\newtheorem{corollary}{Corollary}
\theoremstyle{definition}
\newtheorem{remark}{Remark}
\newtheorem{example}{Example} 
\newtheorem{definition}{Definition} 
\newcommand{\qednew}{\nobreak \ifvmode \relax \else
      \ifdim\lastskip<1.5em \hskip-\lastskip
      \hskip1.5em plus0em minus0.5em \fi \nobreak
      \vrule height0.75em width0.5em depth0.25em\fi}
\numberwithin{equation}{section}
\numberwithin{theorem}{section}
\numberwithin{corollary}{section}
\numberwithin{lemma}{section}
\numberwithin{example}{section}
\numberwithin{remark}{section}
\numberwithin{definition}{section}
\numberwithin{prop}{section}
\newcommand{\Rnd}{\bbR^n_\downarrow}
\newcommand{\Rndp}{(\bbR^n_+)_\downarrow}
\def\expandafter\normalsize\expandafter{%
    \normalsize%
    \setlength\abovedisplayskip{8pt}%
    \setlength\belowdisplayskip{8pt}%
    \setlength\abovedisplayshortskip{-8pt}%
    \setlength\belowdisplayshortskip{2pt}%
}
\begin{document}

\title{On the Convexification of Spectral Sets \\Induced by Non-Invariant Sets
%\thanks{Grants or other notes
%about the article that should go on the front page should be
%placed here. General acknowledgments should be placed at the end of the article.}
}

\date{}

%\titlerunning{Short form of title}        % if too long for running head

\author{Renbo Zhao \thanks{Department of Business Analytics, Tippie College of Business, University of Iowa (\href{mailto:renbo-zhao@uiowa.edu}{renbo-zhao@uiowa.edu}).} %etc.
}

%\authorrunning{Short form of author list} % if too long for running head

\vspace{-5cm}

\maketitle

\begin{abstract}
Given  a  finite-dimensional {FTvN system} $(\mathbb{V},\mathbb{W},\lambda)$~\cite{Gowda_19}, we study the convexification of the spectral set $\lambda^{-1}(\mathcal{C})$ induced by a set $\mathcal{C} \subseteq \mathbb{W}$.   While the case of invariant $\mathcal{C}$ has been relatively well-studied, the results for non-invariant $\mathcal{C}$ are largely lacking  in the literature. We fill this void by developing simple and geometric characterizations of the convex hull and closed convex hull of $\lambda^{-1}(\mathcal{C})$ when $\mathcal{C}$ has no  invariance property. We further specialize our results to the case of invariant $\mathcal{C}$, and  obtain new convexifications of $\lambda^{-1}(\mathcal{C})$ in this case. % for invariant $\calC$, and 
\end{abstract}

\section{Introduction}\label{sec:intro}

This work is devoted to study the convexification of %non-invariant 
spectral sets %, which are 
defined in  the (finite-dimensional) FTvN system~\cite{Gowda_19}. We start %by introducing 
with the basic definitions of the FTvN system. 
Let $\bbV$ and $\bbW$ be two finite-dimensional real inner-product spaces with a (nonlinear) map $\lambda:\bbV\to \bbW$. We call $(\bbV,\bbW,\lambda)$ a {\em FTvN system} %~\cite{Gowda_19}
 if it satisfies the following two conditions: 
\begin{enumerate}[label=(P\arabic*),leftmargin=1.5\parindent]
\item For all $x,y\in\bbV$, we have $\ipt{x}{y}\le \ipt{\lambda(x)}{\lambda(y)}$. %:=\sum_{i=1}^n \lambda_i(x)\lambda_i(y)$.  
\label{P1}
\item \label{P2}  For all  $c\in\bbV$ and $u\in\lambda(\bbV)$, there exists $x\in\bbV$ such that $\lambda(x) = u$ and $\textstyle\ipt{c}{x}= \ipt{\lambda(c)}{\lambda(x)}.$
\end{enumerate}\vspace{-0.5ex}
%It can be shown that
In fact, from~\ref{P1} and~\ref{P2}, we can easily %show that these 
obtain %two properties imply 
two other useful properties of $\lambda$. %Specifically,  
First, the range of $\lambda$  (denoted by $\ran\lambda:=\lambda(\bbV)$) is a nonempty closed convex cone, which we denote by $\calK\subseteq \bbW$ (cf.~\cite[Corollary~2.8]{Gowda_19}). Second, $\lambda$ is {\em norm-preserving}, namely $\norm{\lambda(x)} = \norm{x}$ for all $x\in\bbV$, where the norm $\normt{\cdot}$ is induced by the inner product on $\bbV$ or $\bbW$ (cf.~\cite[Section~3]{Gowda_23}).   
%From~\cite[Section~2]{Gowda_19}, we know that  
%We can show 
%It is these two properties 

The FTvN system is a novel and elegant framework that includes numerous examples.
Perhaps the simplest non-trivial example is that $\bbV = \bbW = \bbR^n$ (equipped with standard scalar product) and $\lambda$ is the {\em reordering map}, namely $\lambda(x):= x^\downarrow$ for $x\in\bbR^n$, where $x^\downarrow$ is obtained by rearranging entries of $x$ in non-increasing order. 
%In this case, 
Two other prominent examples include
\begin{enumerate}[label=(E\arabic*),leftmargin=1.5\parindent,parsep=1pt,topsep=5pt]
\item \label{eg:S^n} $\bbV = \bbS^n$ (i.e., the space of  $n \times n$ real symmetric matrices with trace inner
product), $\bbW = \bbR^n$  and $\lambda$ is the {\em eigenvalue %(or spectral) 
map}, namely for $X\in \bbS^n$, $\lambda(X)$ %\in \bbR^n$ %:= (\lambda_1(X),\ldots, \lambda_n(X))$ 
denotes the vector of eigenvalues of $X$ written in non-increasing order, and 
\item  \label{eg:R^m_n} $\bbV = \bbR^{m\times n}$ (equipped with trace inner product), $\bbW = \bbR^{d}$ (where $d:=\min\{m,n\}$) and $\lambda$ is the {\em singular-value map}, i.e.,  for $X\in \bbR^{n\times n}$, $\lambda(X)$ %\in  \bbR^{d}$ %:= (\lambda_1(X),\ldots, \lambda_n(X))$ 
denotes the vector of singular values of $X$ written in non-increasing order.
\end{enumerate}
In fact, the FTvN system subsumes two important and broad  %fairly general 
systems, namely 
the {\em normal decomposition system}~\cite{Lewis_96} and the system induced by {\em complete and isometric  hyperbolic polynomials}~\cite{Bauschke_01a}, which include the examples above as special cases.  For details and more examples, we refer readers to~\cite[Section~4]{Gowda_23}.  For convenience, we shall call $\lambda:\bbV\to \bbW$ the {\em spectral map}, which emphasizes the important role of the example~\ref{eg:S^n} above.  
%the example~\ref{eg:S^n} can be straightforwardly extended 
 
%For example, if 
 %, i.e., $\lambda_1(X),\ldots, \lambda_n(X)$

In this work, we %are interested in studying 
study the convexification of the %(non-empty)
 {\em spectral sets} in the {FTvN system}  $(\bbV,\bbW,\lambda)$. %Formally, 
 A %non-empty 
 set $\calE\subseteq \bbV$ is called {\em spectral} in $(\bbV,\bbW,\lambda)$ if there exists %a set 
 $\calC\subseteq \bbW$ such that 
 \begin{equation}
 \calE := \lambda^{-1}(\calC)= \lambda^{-1}(\calC\cap\calK).
 \end{equation}
 Since $\lambda(\calE) = \calC\cap\calK$, it is clear that $\calE$ is non-empty if and only if $\calC\subseteq \bbW$ is {\em feasible}, i.e., $\calC\cap\calK\ne \emptyset$. For non-triviality, we shall always focus on the feasible $\calC$ in the sequel. 
% where the set 

The spectral set $\calE$ %in~\eqref{eq:def_S} 
appears as the spectral constraints in many optimization problems, where the spectral map $\lambda$ takes various forms, including the absolute-reordering map on $\bbR^n$~\cite{Kim_22}, the eigenvalue map on $\bbS^n$~\cite{Garner_23,Li_23} or more generally on a Euclidean Jordan algebra of rank $n$~\cite{Ito_23}, and the singular-value map on $\bbR^{m\times n}$~\cite{Li_23}. Recently, some works~\cite{Gowda_19,Ito_23} %(e.g.,) 
pioneered the study of the optimization problems with spectral constraints $\calE$ % that are %$\lambda$ being 
defined by the spectral map in the FTvN system, which unifies and generalizes all the aforementioned forms of $\lambda$. %Specifically, In fact, despite its general nature, under this setting, 
%Indeed, optimization problems of this form not only possess great generality, but also enjoy many attractive properties for algorithmic development. 
%Somewhat surprisingly, even though this problem is very general, it enjoys many attractive properties -- for example, it was shown in~\cite{Gowda_19} that the linear-minimization and projection oracles over $\calE$ can be efficiently implemented via the respective oracles over $\calC$. Inspired by these findings, a projected gradient method was analyzed and implemented in~\cite{Ito_23} to minimize a smooth function over $\calE$.  
%despite the generality of this setting,  
%and this setting unifies 
%more generally, the isometric spectral map as in the FTvN system~\cite{Gowda_19,Ito_23}. 
%. Depending on the context, 
%Specifically, for a $n\times n$ real symmetric matrix $X$ with eigenvalues $\lambda_1(X)\ge  \ldots\ge \lambda_n(X)$, % are the real  of $X$. . % (i.e., the vector space of $n\times n$ real symmetric matrices),  
%the spectral map is given by  

Despite the important role that the spectral set $\lambda^{-1}(\calC)$ plays in optimization, to our knowledge, the convexification results of $\calE$ 
has %mainly focused on 
been largely limited to the setting where $\calC$ possesses certain {\em invariance} properties~\cite{Lewis_96,Kim_22,Jeong_23}. Indeed, the proper notion of invariance %of $\calC$ 
depends on the spectral map $\lambda$. For example, if $\lambda$ is the eigenvalue (resp.\ singular-value) map, then the corresponding notion of invariance is the permutation-  (resp.\ permutation- and sign-) invariance. More generally, %in the normal decomposition system, the invariance of $\calC$ is defined w.r.t.\ some closed group of orthogonal transformations on $\bbR^n$~\cite{Lewis_96},  
%$\calC$ is invariant if it is invariant under %and only if 
%%$\rvA(\calC) = \calC$ for all 
%any linear map 
%in some closed subgroup of $\calO_n$ (namely, the orthogonal group on $\bbR^n$)~\cite{Lewis_96}.
%Even more generally,  
in the FTvN system,  the invariance is defined %in a even more general way 
via the notion of the {\em reduced system} of $(\bbV,\bbW,\lambda)$~\cite{Gowda_23}, which we will elaborate in 
Section~\ref{sec:prelim}.  %(see Definition for details). 
Indeed, under different contexts,  many previous works~\cite{Lewis_96,Kim_22,Jeong_23} have all showed that if $\calC$ {\em is invariant}, % (in some suitable sense), 
then there exists a beautiful ``transfer principle'' that characterizes $\clconv\lambda^{-1}(\calC)$ %(where $\cl$ and $\conv$ stand for ``closure'' and ``convex hull'', respectively), % 
(i.e., the closed  convex hull of $\lambda^{-1}(\calC)$), % (written as ), 
namely, 
\begin{equation}
\clconv\lambda^{-1}(\calC) = \lambda^{-1}(\clconv\calC). 
\end{equation}
In addition, when $\lambda$ is the eigenvalue or singular-value map,%  it was shown in
~\cite[Section~3.1]{Kim_22} showed that the same ``transfer principle'' also applies to $\conv\lambda^{-1}(\calC)$, % (i.e., the convex hull of $\lambda^{-1}(\calC))$, 
namely 
\begin{equation}
 \conv\lambda^{-1}(\calC)= \lambda^{-1}(\conv\calC).\label{eq:transfer_conv}
\end{equation}
In fact, the ``transfer principle'' in~\eqref{eq:transfer_conv} was proved in~\cite[Theorem~3.2(d)]{Jeong_23}  for any spectral map $\lambda$, but under the additional {compactness assumption of} $\calC$. %, and it is unclear whether~\eqref{eq:transfer_conv} holds when $\calC$  is not compact. 
 %That said, it is unclear if this principle applies to any spectral map $\lambda$ defined within the {FTvN system}  $(\bbV,\bbW,\lambda)$. 
Lastly, it is worth mentioning that for three specific FTvN systems, a different %type of convexification 
characterization of $ \conv\lambda^{-1}(\calC)$ has been proposed in~\cite[Section 1]{Kim_22} --- see Remark~\ref{rmk:connection} for details. 
%, it when 
%from the seminal works~\cite{Lewis_96,Kim_22,Jeong_23}, we know that 
%%it is well-known that  
%$\calE$ is closed and convex if and only if $\calC$ is closed and convex~\cite{Lewis_96,Jeong_23}, and furthermore, ~\cite{Kim_22,Jeong_23}. %, where ${\sf clconv}$ stands for ``closed convex hull''. 

While the case of {invariant} $\calC$ is relatively well-studied, the results on convexifying %characterizing $ \conv\lambda^{-1}(\calC)$ and $ \clconv\lambda^{-1}(\calC)$
$\lambda^{-1}(\calC)$ 
 for non-invariant $\calC$ are largely lacking in the literature. In this work, we aim to fill this void by developing simple and geometric characterizations of $ \conv\lambda^{-1}(\calC)$ and $ \clconv\lambda^{-1}(\calC)$ when $\calC$ has no  invariance property.  The motivation of our study is not only due to its natural theoretical interest, but also comes from two important facts.  First, non-invariant instances of $\calC$ frequently appear in the spectral constraints of many optimization problems. Indeed, one of the most common examples of $\calC$  is a general H-polyhedron $\{x\in\bbR^n:Ax\le b\}$, where % defined by general 
 the $A\in\bbR^{m\times n}$ and $b\in\bbR^{m}$ has no specific structures (see e.g.,~\cite{Garner_23}). 
In such an optimization problem,  the spectral constraint  $\lambda^{-1}(\calC)$ may be non-convex, and convexifying $\lambda^{-1}(\calC)$ is %arguably the first 
an important step to  obtain the  convex relaxation of the entire problem.  %these problems is to characterize $\clconv\calE$. % appears to be  
Second, %as we shall see in Section~\ref{sec:prelim}, 
as mentioned above, the definition of the invariance of $\calC$ in the context of the FTvN system involves %the {\em reduced system} of %$(\bbV,\bbW,\lambda)$, % 
another reduced system, %~\cite{Gowda_23}, 
which may not always exist, and hence the notion of the invariance may not always be well-defined (cf.~Remark~\ref{rmk:exist_reduced}). Therefore, it is important to develop convexification results of $\lambda^{-1}(\calC)$ without leveraging the invariance property.

%Unfortunately, the proof techniques in prior works (see e.g.,~\cite{Kim_22,Jeong_23}) leading to  $\clconv\calE = \lambda^{-1}(\clconv\calC)$ for invariant $\calC$ cannot be easily extended to the  case where $\calC$ is non-invariant. As such, new techniques need to be developed to tackle the question posed above. 

%On the other hand, the cha

%there also exist simple examples demonstrating that %it is also well-known that 
%%for a general set 
%$\calC$ without any invariance property, $\calE$ may not be convex even if $\calC$ is. 
%This leads to the main question that we aim to address in this work: %Instances of 

%\begin{center}
%\vspace{1ex}
%{\centering \em How to characterize $\clconv\calE$ when $\calC$ has no  invariance property?
%
%}\vspace{1ex}
%\end{center}

\subsection{Main Contributions}

Our main contributions are three-fold. 

\begin{enumerate}[label=(\alph*),leftmargin=1.5\parindent]
\item We develop simple and geometric characterizations of $ \conv\lambda^{-1}(\calC)$ and $ \clconv\lambda^{-1}(\calC)$ for any feasible $\calC$ (cf.~Theorem~\ref{thm:feasible}). Our characterizations essentially only involve two geometric objects, namely $\conv (\calC\cap\calK)$ and $\calK^\circ$ (i.e., the polar cone of $\calK$). We provide examples where both geometric objects admit simple descriptions. 

\item  We identify structural assumptions on $\calC$ that allow us to strengthen the characterizations of $ \conv\lambda^{-1}(\calC)$ and $ \clconv\lambda^{-1}(\calC)$, and prove the tightness of such assumptions (cf. Theorem~\ref{thm:calC_subseteq}).
\item We specialize our results to the case of invariant $\calC$, and obtain new characterizations of $ \conv\lambda^{-1}(\calC)$ and $ \clconv\lambda^{-1}(\calC)$ in this case. % for  invariant $\calC$. 
In particular, our results improve some of the results in~\cite[Theorem~3.2]{Jeong_23} (cf.~Remark~\ref{rmk:improve}). 

\end{enumerate}

\subsection{Notations}
Let $[n]:=\{1,\ldots,n\}$. %We let inters For any three sets $\calA,\calB,\calC\subseteq \bbV$, $\calA\cap\calB+\calC = (\calA\cap\calB)+\calC $ . 
 For a nonempty cone $\calK\subseteq\bbR^n$, let $\calK^\circ:= \{y\in\bbW:\ipt{y}{x}\le 0,\;\forall\,x\in\calK \}$ be the polar cone of $\calK$.  
% define its polar $\calK^\circ:= \{y\in\bbR^n: \ipt{y}{x}\le 0, \,\forall\,x\in\calK\}$. 
%Notation:
%Given $m,n\ge 1$, $k:=\{m,n\}$ and $x\in\bbR^k$, let $\Diag(x)\in \bbR^{m\times n}$ be a rectangular diagonal matrix with $x$ on the diagonal (and if $m=n$, then $\Diag(x)$ becomes square). 
%Let $\normt{\cdot}$ be the norm induced by the inner product $\ipt{\cdot}{\cdot}$ on $\bbV$. 
Let $\bbS_{++}^n$ %and $\bbS_{+}^n$ 
denote the set of $n\times n$ symmetric positive definite matrices. 
Also, for $x\in\bbR^n$, define $\normt{x}_0:= \abst{\{i\in[n]:x_i\ne 0\}}$ and  $\normt{x}_p:= (\sum_{i=1}^n \abst{x_i}^p)^{1/p}$ for $p\ge 1$. %We define $\abst{x}:= (\abst{x_1},\ldots,\abst{x_n})$ and let $x^\downarrow$ be the vector with entries of $x$ arranged in non-increasing order. 
Also, let $\bbR_+^n$ denote the nonnegative orthant, %:=[0,+\infty)^n$, 
and define $\Rnd:=\{x\in\bbR^n:x_1\ge \ldots \ge x_n\}$ and 
\begin{equation}
\Rndp:= \bbR_+^n\cap \Rnd = \{x\in\bbR^n: x_1\ge \cdots\ge x_n \ge 0\}.  
\label{eq:Rndp}
\end{equation} 
Also, for $x\in\bbR^n$, define $\abst{x}:= (\abst{x_1},\ldots,\abst{x_n})$, and hence $|x|^\downarrow\in\Rndp$ denotes the vector obtained by rearranging entries of $|x|$ in non-increasing order. % and let $x^\downarrow$ be the vector with entries of $x$ arranged in non-increasing order. 
%:= (x_{[1]},\ldots,x_{[n]})$, where $x_{[i]}$ denotes the $i$-th largest entry of $x$.   
%Given real vector spaces  $\bbV$ and $\bbW$, the function $\psi:\bbV\to\bbW$ is called positively homogeneous (p.h.) if $\psi(tx) = t\psi(x)$ for all $t>0$ and $x\in\bbV$.  Given a closed convex function $f:\bbR^n\to\bbR\cup\{+\infty\}$, define $\dom f:= \{x\in\bbR^n:f(x)<+\infty\}$. 

\noindent

%Apart from addressing the main question above, a side goal of this work is to establish alternative characterizations of $\clconv\calE$ when $\calC$ is invariant in the context of FTvN system (cf.\ Definitions~\ref{def:lambda_comp} and~\ref{def:gamma_inv}).  % w.r.t.\ the spectral map $\lambda$ satisfying~\ref{P1} and~\ref{P2}. 
%Although the result $\clconv\calE = \lambda^{-1}(\clconv\calC)$ in~\cite{Jeong_23} is simple and elegant, sometimes $\clconv\calC$ is complicated and difficult to describe, even though  $\calC$ itself admits a simple description (see Remark~\ref{rmk:adv_clconv_S2_diff}). Therefore, we aim to characterize $\clconv\calE$ in certain ways that do not involve  $\clconv\calC$, and hopefully in some cases, these new characterizations admit simpler descriptions compared to $ \lambda^{-1}(\clconv\calC)$. 
 
%We call 

%\newpage

\section{Preliminaries} \label{sec:prelim}

%From,

Let us introduce some basic results and definitions of the FTvN system in~\cite{Gowda_19,Gowda_23}, which lay the foundation of our convexification results in Section~\ref{sec:res}.

\noindent 

\begin{definition}[Orbit]
For $x\in\bbV$, define its $\lambda$-orbit by $[x]:= \{y\in\bbV:\lambda(y) =\lambda(x)\} = \lambda^{-1}(\lambda(x))$.
\end{definition}

\begin{remark} \label{rmk:compact_preimage}
Note that by the continuity and norm-preserving property of $\lambda$, if $\calC \cap \calK$ is compact (%which holds when 
or in particular, if $\calC$ is compact), then $\lambda^{-1}(\calC) = \lambda^{-1}(\calC\cap \calK)$ is compact. In particular, we know that $[x]= \lambda^{-1}(\lambda(x))$ is compact for all $x\in\bbV$. 
In addition, note that $\calO\subseteq\bbV$ is a $\lambda$-orbit (of some $x\in\bbV$) if and only if $\calO = \lambda^{-1}(u)$ for some $u\in\calK$. 
\end{remark}

\noindent 
Based on the notion of orbit, we introduce several equivalent definitions of a spectral set $\calE$ %is a spectral set 
in $(\bbV,\bbW,\lambda)$ in the following lemma.
For completeness, we provide its proof %of Lemma~\ref{lem:spectral_set} 
in the Appendix~\ref{app:proof}. 

\begin{lemma}[{\cite[pp.~10]{Gowda_19}}]\label{lem:spectral_set} 
The following statements are equivalent:
\begin{enumerate}[label=(\roman*),leftmargin=1.5\parindent]
\item \label{item:spectral_i} $\calE\subseteq \bbV$ is a spectral set in $(\bbV,\bbW,\lambda)$.
\item \label{item:spectral_ii} If $x\in \calE$, then $[x]\subseteq \calE$.
\item \label{item:spectral_iii} $\calE = [\calE]:= \bigcup_{x\in\calE}\, [x]$.
\item \label{item:spectral_iv} $\calE$ is a union of $\lambda$-orbits. %, i.e., there exists  $\calF\subseteq \bbV$ such that $\calE = \bigcup_{x\in\calF}\, [x]$. 
\end{enumerate}
\end{lemma}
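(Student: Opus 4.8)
The plan is to prove the four statements equivalent by establishing the cycle $\ref{item:spectral_i}\Rightarrow\ref{item:spectral_ii}\Rightarrow\ref{item:spectral_iii}\Rightarrow\ref{item:spectral_iv}\Rightarrow\ref{item:spectral_i}$. Everything will rest on two elementary observations already available: first, for every $x\in\bbV$ one has $x\in[x]$ (since trivially $\lambda(x)=\lambda(x)$) and $[x]=\lambda^{-1}(\lambda(x))$ by definition; second, $\ran\lambda=\calK$, so by Remark~\ref{rmk:compact_preimage} a subset of $\bbV$ is a $\lambda$-orbit precisely when it equals $\lambda^{-1}(u)$ for some $u\in\calK$.

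For $\ref{item:spectral_i}\Rightarrow\ref{item:spectral_ii}$, I would write $\calE=\lambda^{-1}(\calC)$, take $x\in\calE$ so that $\lambda(x)\in\calC$, and note that any $y\in[x]$ has $\lambda(y)=\lambda(x)\in\calC$, hence $y\in\calE$; thus $[x]\subseteq\calE$. For $\ref{item:spectral_ii}\Rightarrow\ref{item:spectral_iii}$, the inclusion $\calE\subseteq[\calE]$ holds because $x\in[x]$, while $\ref{item:spectral_ii}$ gives $[x]\subseteq\calE$ for each $x\in\calE$ and hence $[\calE]=\bigcup_{x\in\calE}[x]\subseteq\calE$; combining the two gives $\calE=[\calE]$. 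The implication $\ref{item:spectral_iii}\Rightarrow\ref{item:spectral_iv}$ is immediate, since $\calE=\bigcup_{x\in\calE}[x]$ exhibits $\calE$ as a union of $\lambda$-orbits.

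It remains to handle $\ref{item:spectral_iv}\Rightarrow\ref{item:spectral_i}$, which I regard as the only step needing a moment's thought, namely producing the inducing set $\calC$. Writing $\calE=\bigcup_{i\in I}\calO_i$ with each $\calO_i$ a $\lambda$-orbit, I would simply take $\calC\defeq\lambda(\calE)$. Then $\calE\subseteq\lambda^{-1}(\calC)$ is clear, and conversely if $\lambda(y)\in\calC$ then $\lambda(y)=\lambda(x)$ for some $x\in\calE$; since $x$ belongs to some orbit $\calO_i\subseteq\calE$, we get $y\in\lambda^{-1}(\lambda(x))=\calO_i\subseteq\calE$, so $\calE=\lambda^{-1}(\calC)$. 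Finally, because $\calC=\lambda(\calE)\subseteq\ran\lambda=\calK$, we have $\calC=\calC\cap\calK$, and therefore $\calE=\lambda^{-1}(\calC)=\lambda^{-1}(\calC\cap\calK)$, so $\calE$ is spectral. The convenience of the choice $\calC=\lambda(\calE)$ is precisely that it lands inside $\calK$ automatically, which disposes of the second equality in the definition of a spectral set with no extra work; this is the only point in the argument where any care at all is required.
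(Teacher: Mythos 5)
Your proof is correct and follows essentially the same route as the paper's: the cycle \ref{item:spectral_i}$\Rightarrow$\ref{item:spectral_ii}$\Rightarrow$\ref{item:spectral_iii}$\Rightarrow$\ref{item:spectral_iv}$\Rightarrow$\ref{item:spectral_i}, with identical arguments for the first three implications. For the last step the paper collects the orbit labels into a set $\calD\subseteq\calK$ and notes $\calE=\lambda^{-1}(\calD)$, which yields the same inducing set as your choice $\calC=\lambda(\calE)$, so the difference is purely cosmetic.
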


%Let us introduce a few definitions. 
% In addition, 
%\noindent 
% For any $x,y\in\bbV$, we say that $x$ commutes with $y$ if $\langle x , y \rangle = \langle \lambda ( x ) , \lambda ( y ) \rangle$. 

 \noindent 
Next, we introduce several important properties of the spectral map $\lambda:\bbV\to \bbW$.

\begin{lemma}[{\cite[Section~2]{Gowda_19}}]\label{lem:gowda}
Let $( \bbV , \bbW , \lambda )$ be a FTvN system. %For all $x , y , c \in \bbV$, 
Then we have the following:
\begin{enumerate}[label=(\roman*),leftmargin=1.5\parindent]
\item \label{item:ph} $\lambda ( t x ) = t \lambda ( x )$ for all $x\in\bbV$ and $t \geq 0$.
\item $\lambda:\bbV \to \bbW$ is 1-Lipschitz on $\bbV$, namely, $\| \lambda ( x ) - \lambda ( y ) \| \leq \| x - y \|$ for all $x,y\in\bbV$.
\item \label{item:sum_majorization} %$\langle \lambda ( c ) , \lambda ( x + y ) \rangle \leq \langle \lambda ( c ) , \lambda ( x ) \rangle + \langle \lambda ( c ) , \lambda ( y ) \rangle$, for all $x,y,c\in\bbV$. More generally, 
For any $c , x _ { 1 } , x _ { 2 } , \ldots , x _ { k }\in\bbV$, we have  % in $\nu$ ,
\begin{equation}
\left\langle \lambda (c), \lambda \left(x _ {1} + x _ {2} + \dots + x _ {k}\right) \right\rangle \leq \left\langle \lambda (c), \lambda \left(x _ {1}\right) + \lambda \left(x _ {2}\right) + \dots + \lambda \left(x _ {k}\right) \right\rangle ,
\end{equation}
or equivalently, $\lambda \left(x _ {1} + x _ {2} + \dots + x _ {k}\right) - ( \lambda \left(x _ {1}\right) + \lambda \left(x _ {2}\right) + \dots + \lambda \left(x _ {k}\right))\in\calK^\circ$. Consequently,
\begin{equation}
\normt{\lambda \left(x _ {1} + x _ {2} + \dots + x _ {k}\right)} \le  \normt{ \lambda \left(x _ {1}\right) + \lambda \left(x _ {2}\right) + \dots + \lambda \left(x _ {k}\right)}. 
\end{equation}
\item The following are equivalent for all $x,y\in\bbV$:
\begin{enumerate}
\item %$x$ and y commute in $( \nu , \mathcal { W } , \lambda )$ , that is, 
$\langle x , y \rangle = \langle \lambda ( x ) , \lambda ( y ) \rangle$,
\item $\lambda ( x + y ) = \lambda ( x ) + \lambda ( y )$,
\item $\| \lambda ( x ) - \lambda ( y ) \| = \| x - y \| .$
\end{enumerate}
\end{enumerate}
\end{lemma}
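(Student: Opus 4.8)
The plan is to deduce all four items from the two defining properties \ref{P1}--\ref{P2}, together with the norm-preserving identity $\norm{\lambda(x)}=\norm{x}$ and the fact (both recalled above) that $\calK=\lambda(\bbV)$ is a closed convex cone. The one computation used repeatedly is that, for all $u,v\in\bbV$, expanding the squared norms and using norm-preservation gives
\begin{equation}
\norm{\lambda(u)-\lambda(v)}^2-\norm{u-v}^2 = 2\big(\langle u,v\rangle-\langle\lambda(u),\lambda(v)\rangle\big).
\label{eq:pol-plan}
\end{equation}
By \ref{P1} the right-hand side of \eqref{eq:pol-plan} is $\le 0$, which is exactly the $1$-Lipschitz bound of item~(ii); and it vanishes precisely when $\langle u,v\rangle=\langle\lambda(u),\lambda(v)\rangle$, which is the equivalence (a)$\Leftrightarrow$(c) in item~(iv).

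For item~(i), the case $t=0$ is immediate since $\norm{\lambda(0)}=\norm{0}=0$ forces $\lambda(0)=0=0\cdot\lambda(x)$. For $t>0$, apply \ref{P1} to the pair $(x,tx)$ to get $t\norm{x}^2\le\langle\lambda(x),\lambda(tx)\rangle$, while Cauchy--Schwarz and norm-preservation give $\langle\lambda(x),\lambda(tx)\rangle\le\norm{\lambda(x)}\norm{\lambda(tx)}=t\norm{x}^2$. Hence Cauchy--Schwarz holds with equality, so $\norm{\lambda(tx)}\lambda(x)=\norm{\lambda(x)}\lambda(tx)$, i.e. $t\norm{x}\lambda(x)=\norm{x}\lambda(tx)$; this yields $\lambda(tx)=t\lambda(x)$ when $x\ne 0$, and the identity is trivial when $x=0$.

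Item~(iii) is the crux, and the point is to invoke \ref{P2} in the right way. Set $s:=x_1+\cdots+x_k$ and, for a fixed $c\in\bbV$, apply \ref{P2} with its two inputs taken to be $s$ and $\lambda(c)\in\lambda(\bbV)$, respectively; this produces $c'\in\bbV$ with $\lambda(c')=\lambda(c)$ and $\langle s,c'\rangle=\langle\lambda(s),\lambda(c')\rangle=\langle\lambda(c),\lambda(s)\rangle$. Decoupling the sum and applying \ref{P1} to each pair $(x_i,c')$,
\begin{equation}
\langle\lambda(c),\lambda(s)\rangle=\sum_{i=1}^{k}\langle x_i,c'\rangle\le\sum_{i=1}^{k}\langle\lambda(x_i),\lambda(c')\rangle=\Big\langle\lambda(c),\ \sum_{i=1}^{k}\lambda(x_i)\Big\rangle,
\end{equation}
which is the asserted inequality. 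Since it holds for every $c\in\bbV$ and $\calK=\lambda(\bbV)$, it says exactly that $\lambda(s)-\sum_{i}\lambda(x_i)$ has nonpositive pairing with every element of $\calK$, i.e. lies in $\calK^\circ$; taking the special element $v=\lambda(s)\in\calK$ and using Cauchy--Schwarz gives $\norm{\lambda(s)}^2\le\langle\sum_i\lambda(x_i),\lambda(s)\rangle\le\norm{\sum_i\lambda(x_i)}\norm{\lambda(s)}$, hence the norm consequence.

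It remains to finish item~(iv). For (b)$\Rightarrow$(a): from $\lambda(x+y)=\lambda(x)+\lambda(y)$ and norm-preservation, $\norm{x+y}^2=\norm{\lambda(x)+\lambda(y)}^2=\norm{x}^2+2\langle\lambda(x),\lambda(y)\rangle+\norm{y}^2$, and comparing with $\norm{x+y}^2=\norm{x}^2+2\langle x,y\rangle+\norm{y}^2$ gives (a). For (a)$\Rightarrow$(b): if $\langle x,y\rangle=\langle\lambda(x),\lambda(y)\rangle$ then, as in the previous computation, $\norm{\lambda(x+y)}=\norm{x+y}=\norm{\lambda(x)+\lambda(y)}$; writing $p:=\lambda(x+y)\in\calK$ and $q:=\lambda(x)+\lambda(y)$, item~(iii) gives $p-q\in\calK^\circ$, so $\norm{p}^2\le\langle q,p\rangle\le\norm{q}\norm{p}=\norm{p}^2$, and equality throughout (equal norms together with equality in Cauchy--Schwarz) forces $q=p$. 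I expect the only genuinely delicate points to be choosing the correct instantiation of \ref{P2} in item~(iii) and the equality-case bookkeeping in (a)$\Rightarrow$(b) and item~(i); everything else is routine expansion of squared norms.
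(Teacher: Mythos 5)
Your proposal is correct, but there is nothing in the paper to compare it against: Lemma~\ref{lem:gowda} is stated as a quoted result from \cite[Section~2]{Gowda_19} and the paper supplies no proof of it (the only appendix proof is for Lemma~\ref{lem:spectral_set}). Your self-contained derivation from \ref{P1}, \ref{P2}, norm-preservation and $\calK=\lambda(\bbV)$ checks out at every step: the polarization identity $\norm{\lambda(u)-\lambda(v)}^2-\norm{u-v}^2=2(\ipt{u}{v}-\ipt{\lambda(u)}{\lambda(v)})$ does simultaneously deliver item~(ii) and the equivalence (a)$\Leftrightarrow$(c); the Cauchy--Schwarz equality argument for item~(i) is sound (including the $t=0$ and $x=0$ degenerate cases); the instantiation of \ref{P2} with the pair $(s,\lambda(c))$, $s=x_1+\cdots+x_k$, is exactly the right move for item~(iii), and since every element of $\calK$ is of the form $\lambda(c)$, the inequality for all $c$ is indeed equivalent to $\lambda(s)-\sum_i\lambda(x_i)\in\calK^\circ$, with the norm bound following by pairing against $\lambda(s)$; and the equality-case bookkeeping in (a)$\Rightarrow$(b), using item~(iii) with $k=2$ plus equal norms to force $\lambda(x+y)=\lambda(x)+\lambda(y)$, is valid. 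This is essentially the standard route taken in the cited source, so your write-up serves as a legitimate stand-alone proof of the lemma rather than a genuinely different argument.
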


%Observe that Item $( b )$ above gives the continuity of $\lambda$ ; thus, every $\lambda$ -orbit is closed in $\nu$ , and,additionally compact when $\nu$ is finite dimensional.

% In addition, 

%Let us make a few remarks. 

\begin{definition}[Majorization]
We say that $x$ is majorized by $y$, written as $x\prec y$, if $x\in\conv[y]$. 
\end{definition}

\begin{lemma}[{\cite[Proposition 8.3]{Gowda_23}}]\label{lem:majorization} 
For any  $x,y\in\bbV$, the following %statements 
are equivalent:
 %if and only if $\ipt{\lambda(c)}{\lambda(x)}\le \ipt{\lambda(c)}{\lambda(y)}$ for all $c\in\bbV$, or equivalently, $\lambda(x) - \lambda(y)\in\calK^\circ$. 
%The following hold:
\begin{enumerate}[label=(\roman*),leftmargin=1.5\parindent]
\item  $x\prec y$, i.e., $x\in\conv[y]$.
\item $\ipt{\lambda(c)}{\lambda(x)}\le \ipt{\lambda(c)}{\lambda(y)}$ for all $c\in\bbV$.
\item $\lambda(x) - \lambda(y)\in\calK^\circ$.
\end{enumerate}
\end{lemma}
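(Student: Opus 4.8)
The plan is to route the three conditions through (ii), proving (ii) $\Leftrightarrow$ (iii) by an essentially definitional argument, (i) $\Rightarrow$ (ii) from the subadditivity estimate already recorded in Lemma~\ref{lem:gowda}, and (ii) $\Rightarrow$ (i) by a separation argument that carries the real content. For the first equivalence: since $\calK = \ran\lambda = \lambda(\bbV)$, membership $\lambda(x)-\lambda(y)\in\calK^\circ$ says exactly that $\ipt{\lambda(x)-\lambda(y)}{\lambda(c)}\le 0$ for every $c\in\bbV$, which is a trivial rearrangement of (ii); nothing needs to be done here.

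For (i) $\Rightarrow$ (ii) I would write $x=\sum_{i=1}^k t_i y_i$ as a convex combination with $y_i\in[y]$, so $\lambda(y_i)=\lambda(y)$, fix $c\in\bbV$, and apply Lemma~\ref{lem:gowda}\ref{item:sum_majorization} to the scaled points $t_1y_1,\dots,t_ky_k$; positive homogeneity (Lemma~\ref{lem:gowda}\ref{item:ph}) then gives $\lambda(t_iy_i)=t_i\lambda(y_i)=t_i\lambda(y)$, and summing yields $\ipt{\lambda(c)}{\lambda(x)}\le\sum_i t_i\ipt{\lambda(c)}{\lambda(y)}=\ipt{\lambda(c)}{\lambda(y)}$, which is (ii).

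The substance, and the step I expect to be the main obstacle, is (ii) $\Rightarrow$ (i), which I would prove by contraposition. Suppose $x\notin\conv[y]$. Since $[y]=\lambda^{-1}(\lambda(y))$ is compact by Remark~\ref{rmk:compact_preimage}, its convex hull is a compact convex subset of the finite-dimensional space $\bbV$, so strict separation of the point $x$ produces a $c\in\bbV$ with $\ipt{c}{x}>\max_{z\in[y]}\ipt{c}{z}$. The crux is to identify this support value with $\ipt{\lambda(c)}{\lambda(y)}$: by \ref{P1} we have $\ipt{c}{z}\le\ipt{\lambda(c)}{\lambda(z)}=\ipt{\lambda(c)}{\lambda(y)}$ for every $z\in[y]$, and \ref{P2} applied with $u=\lambda(y)\in\lambda(\bbV)$ furnishes some $z^\star$ with $\lambda(z^\star)=\lambda(y)$ and $\ipt{c}{z^\star}=\ipt{\lambda(c)}{\lambda(z^\star)}=\ipt{\lambda(c)}{\lambda(y)}$, so the maximum is attained and equals $\ipt{\lambda(c)}{\lambda(y)}$. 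Applying \ref{P1} once more to $x$ gives $\ipt{\lambda(c)}{\lambda(x)}\ge\ipt{c}{x}>\ipt{\lambda(c)}{\lambda(y)}$, contradicting (ii). The points to be careful about are precisely this computation of the support function of the orbit $[y]$ in direction $c$ — it is where both FTvN axioms are genuinely used, the Fan--Theobald--von~Neumann inequality \ref{P1} for the upper bound and the attainment clause \ref{P2} for equality — and the verification that $\conv[y]$ is closed so that strict separation of $x$ from it is legitimate; the remaining manipulations are routine.
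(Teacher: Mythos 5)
Your proof is correct, and it is worth noting that the paper does not actually prove this lemma at all --- it is imported wholesale as a citation to \cite[Proposition~8.3]{Gowda_23} --- so your argument serves as a self-contained substitute built only from ingredients the paper does state. Each step checks out: (ii)~$\Leftrightarrow$~(iii) is indeed definitional once one uses $\calK=\lambda(\bbV)$; (i)~$\Rightarrow$~(ii) via Lemma~\ref{lem:gowda}\ref{item:ph} and \ref{item:sum_majorization} applied to the scaled points $t_iy_i$ is exactly right; and in (ii)~$\Rightarrow$~(i) the two delicate points you flag are handled correctly --- $[y]$ is compact by Remark~\ref{rmk:compact_preimage}, so $\conv[y]$ is compact (hence closed) in the finite-dimensional space $\bbV$ and strict separation applies, and the identification $\max_{z\in[y]}\ipt{c}{z}=\ipt{\lambda(c)}{\lambda(y)}$ follows from \ref{P1} for the upper bound and \ref{P2} for attainment. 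One economy you could have taken: that support-function identity is precisely Lemma~\ref{lem:sup_spectral} specialized to the singleton $\calC=\{\lambda(y)\}$ (where $\calC\cap\calK=\{\lambda(y)\}$ and $\lambda^{-1}(\calC)=[y]$), so citing it would compress the crux of your separation step; deriving it directly from the axioms, as you do, makes the dependence on the FTvN structure more transparent but is otherwise the same argument.
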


 \noindent
The next lemma states that linear optimization over the spectral set $\lambda^{-1}(\calC)\subseteq \bbV$ can be equivalently carried out over $\calC\cap\calK\subseteq \bbW$. 

\begin{lemma}[{\cite[Corollary 3.3]{Gowda_19}}]\label{lem:sup_spectral} 
For any $c\in\bbV$ and any feasible $\calC\subseteq \bbW$, we have 
\begin{equation}
\textstyle \sup_{y\in \lambda^{-1}(\calC)}\; \ipt{c}{y} = \sup_{u\in \calC\cap\calK}\; \ipt{\lambda(c)}{u}. 
\end{equation}
\end{lemma}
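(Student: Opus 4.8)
The plan is to prove the two inequalities ``$\le$'' and ``$\ge$'' separately, each obtained from one of the two defining axioms of the FTvN system, using only the elementary fact that $\ran\lambda=\calK$ (so that $\lambda$ maps $\lambda^{-1}(\calC)$ onto $\calC\cap\calK$).

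For ``$\le$'': fix any $y\in\lambda^{-1}(\calC)$. Axiom~\ref{P1} applied to the pair $c,y$ gives $\ipt{c}{y}\le \ipt{\lambda(c)}{\lambda(y)}$. Since $\ran\lambda=\calK$ and $y\in\lambda^{-1}(\calC)$, we have $\lambda(y)\in\calC\cap\calK$, hence $\ipt{\lambda(c)}{\lambda(y)}\le \sup_{u\in\calC\cap\calK}\ipt{\lambda(c)}{u}$. Taking the supremum over $y\in\lambda^{-1}(\calC)$ yields the first inequality. This direction uses only $\lambda(\lambda^{-1}(\calC))\subseteq\calC\cap\calK$ together with~\ref{P1}.

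For ``$\ge$'': fix any $u\in\calC\cap\calK$. Because $\calK=\ran\lambda=\lambda(\bbV)$, we have $u\in\lambda(\bbV)$, so axiom~\ref{P2} applied to $c$ and this $u$ produces some $x\in\bbV$ with $\lambda(x)=u$ and $\ipt{c}{x}=\ipt{\lambda(c)}{\lambda(x)}=\ipt{\lambda(c)}{u}$. From $\lambda(x)=u\in\calC$ we get $x\in\lambda^{-1}(\calC)$, hence $\ipt{\lambda(c)}{u}=\ipt{c}{x}\le\sup_{y\in\lambda^{-1}(\calC)}\ipt{c}{y}$. Taking the supremum over $u\in\calC\cap\calK$ gives the reverse inequality, and combining the two completes the proof.

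There is no real obstacle here; the argument is essentially immediate once~\ref{P1} and~\ref{P2} are invoked correctly. The only points needing a touch of care are the bookkeeping about where $\lambda$ sends which set (using $\ran\lambda=\calK$ in both directions) and the remark that feasibility of $\calC$ — equivalently $\calC\cap\calK\ne\emptyset$, equivalently $\lambda^{-1}(\calC)\ne\emptyset$ — ensures both suprema are taken over nonempty sets, so that the identity is not a vacuous statement about $\sup\emptyset$. (No attainment of the suprema is needed, since the equality follows from the elementwise domination established in each direction.)
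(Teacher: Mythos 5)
Your proof is correct: both directions follow exactly as you say, the ``$\le$'' direction from axiom~\ref{P1} together with $\lambda(\lambda^{-1}(\calC))\subseteq\calC\cap\calK$, and the ``$\ge$'' direction from axiom~\ref{P2} applied to each $u\in\calC\cap\calK\subseteq\ran\lambda$. The paper itself gives no proof of this lemma --- it simply cites \cite[Corollary~3.3]{Gowda_19} --- and your argument is the standard derivation directly from the two defining properties of an FTvN system, which is essentially what the cited reference does.
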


%\begin{proof}
 %, and hence is spectral. 
%We follow~\ref{item:spectral_i} $\Rightarrow$ \ref{item:spectral_ii} $\Rightarrow$ \ref{item:spectral_iii} $\Rightarrow$ \ref{item:spectral_iv} $\Rightarrow$ \ref{item:spectral_i}. If $\calE = \lambda^{-1}(\calC)$ for some feasible $\calC$, then $\calE = \bigcup_{u\in\calC\cap\calK}\,\lambda^{-1}(u)$, and hence  $x\in\calE$ if and only if $\lambda(x)\in\calC$
%\end{proof}

%\begin{definition}
%\end

\noindent
Lastly, let us introduce the notion of the {\em reduced system} of a FTvN system.

\begin{definition}[Reduced System] \label{def:reduced}
 Let $(\bbW, \bbW , \mu )$ be a FTvN system. 
We call $(\bbW, \bbW , \mu )$ a reduced system of the FTvN system $(\bbV,\bbW,\lambda)$ if  $\mu(u) = u$ for all $u\in\calK$ and $\ran\mu=\ran\lambda = \calK$. 
\end{definition}

\begin{remark} \label{rmk:mu(u)_in_[u]}
From Definition~\ref{def:reduced}, it is clear that $\mu^2 = \mu$ on $\bbW$, namely $\mu(\mu(u)) = \mu(u)$ for all $u\in\bbW$. As a result, we know that $\mu(u)\in[u]$ for all $u\in\bbW$, where $[u]:= \mu^{-1}(\mu(u))$ denotes the $\mu$-orbit of $u$. 
\end{remark}

\noindent
The reduced system provides us a  powerful and unified way to introduce the notion of invariance of a set 
$\calC\subseteq \bbW$.% Specifically, 
%The purpose of introducing the reduced system of the 
\begin{definition}[Invariant Set] \label{def:invariant}
 We call $\calC\subseteq \bbW$ {\em invariant} if there exists a reduced system $(\bbW, \bbW , \mu )$ of $(\bbV,\bbW,\lambda)$ such that $\calC$ is  {\em spectral} in $(\bbW, \bbW , \mu)$, i.e.,  there exists  $\calQ\subseteq \bbW$ such that $\calC = \mu^{-1}(\calQ)$. 
\end{definition}

\begin{remark}\label{rmk:nonempty_spectral}
Note that if $\calC$ is nonempty and spectral in $(\bbW, \bbW , \mu )$, then $\calC$ is feasible. To see this, take any $u\in \calC$, and  from Remark~\ref{rmk:mu(u)_in_[u]} and Lemma~\ref{lem:spectral_set}\ref{item:spectral_ii}, we know that $\mu(u)\in[u]\subseteq \calC$. Since $\mu(u)\in\calK$, we know that  $\mu(u)\in \calC\cap \calK$, and hence $\calC\cap \calK\ne \emptyset$. 
\end{remark}

%\noindent
\begin{example}\label{eg:inv}
To illustrate Definition~\ref{def:invariant}, let us consider the following examples. 
\begin{enumerate}[label=(\roman*),leftmargin=1.5\parindent]
\item $(\bbV,\bbW,\lambda)$ is the FTvN system in~\ref{eg:S^n} %$ = (\bbS^n,\bbR^n,\lambda)$ with $\lambda$  
or $(\bbR^n,\bbR^n,\lambda)$ with $\lambda(x) = x^\downarrow$: the reduced system $(\bbW, \bbW , \mu )=(\bbR^n,\bbR^n,\mu)$ with $\mu(u) = u^\downarrow$ for $u\in \bbR^n$. In this case, $\calC\subseteq \bbW$ is spectral in  $(\bbW, \bbW , \mu )$ if and only if it is {\em permutation-invariant}. 
\item $(\bbV,\bbW,\lambda)$ is the FTvN system in~\ref{eg:R^m_n} %$ = (\bbS^n,\bbR^n,\lambda)$ with $\lambda$  
or $(\bbR^d,\bbR^d,\lambda)$ with $\lambda(x) = |x|^\downarrow$: the reduced system $(\bbW, \bbW , \mu )=(\bbR^d,\bbR^d,\mu)$ with $\mu(u) = |u|^\downarrow$  for $u\in \bbR^d$. In this case, $\calC\subseteq \bbW$ is spectral in  $(\bbW, \bbW , \mu )$ if and only if it is {\em permutation- and sign-invariant}. 
\item $(\bbV,\bbW,\lambda)=(\bbR^d,\bbR^d,\lambda)$ with $\lambda(x) = |x|$: the reduced system $(\bbW, \bbW , \mu )$ is itself.  In this case, $\calC\subseteq \bbW$ is spectral in  $(\bbW, \bbW , \mu )$ if and only if it is {\em sign-invariant}. 
\end{enumerate}
\end{example}

\begin{remark}\label{rmk:exist_reduced}
As noted in~\cite[Section~10]{Gowda_23}, given any FTvN system $(\bbV,\bbW,\lambda)$, its  reduced system is not guaranteed to exist, %as can be seen from the example 
and a counterexample would be the FTvN system induced by a general complete and isometric  hyperbolic polynomial.  This reveals a fundamental limitation of the notion of invariance --- it may not even be  well-defined! Therefore, as far as convexification is concerned, it is important to develop results without leveraging the notion of invariance. 
%highlights 
\end{remark}

%Finally, it is worth mentioning that 

\section{Main Results} \label{sec:res}

 %\subsection{Non-Invariant}

%\subsection{First Result}

Let us start with the following important result, which is the cornerstone in establishing other results in this work.  %which characterizes $\mathsf{c o n v} \lambda^ {- 1} (\mathcal {D})$ and $\mathsf{clc o n v} \lambda^ {- 1} (\mathcal {D})$ for any nonempty convex set  $\mathcal { D }  \subseteq \calK$. 

\begin{theorem} \label{thm:main}
Let $\mathcal { D }  \subseteq \calK$ be nonempty and convex.  Then $\lambda^{-1}(\calD + \mathcal {K} ^ {\circ})$ is convex, and moreover, %we have
\begin{equation}
\mathsf{c o n v} \lambda^ {- 1} (\mathcal {D}) = \lambda^{-1}(\calD+\calK^\circ) = \{x \in \mathbb {V}: \exists\; u \in \mathcal {D}\st \lambda (x) - u \in \mathcal {K} ^ {\circ} \}. \label{eq:conv_D}
\end{equation} 
As a result, $\mathsf{clc o n v} \lambda^ {- 1} (\mathcal {D}) = \cl\lambda^{-1}(\calD+\calK^\circ)$, and if 
$\calD+\calK^\circ$ is closed, then 
\begin{equation}
\mathsf{clc o n v} \lambda^ {- 1} (\mathcal {D}) = \lambda^{-1}(\calD+\calK^\circ). \label{eq:conv_eq_lambda-1}
\end{equation}
 %\begin{equation}
%\end{equation}
%\begin{enumerate}[label=(\roman*)]
%\item $ $
%\end{enumerate}
\end{theorem}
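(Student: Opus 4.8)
The goal is to prove that $\conv\lambda^{-1}(\calD) = \lambda^{-1}(\calD+\calK^\circ)$ (together with the convexity of the right-hand side) for nonempty convex $\calD\subseteq\calK$, and then deduce the closed-convex-hull statements as corollaries. I would prove the set equality by a two-sided inclusion, using Lemma~\ref{lem:sup_spectral} (linear optimization transfers between $\lambda^{-1}(\calC)$ and $\calC\cap\calK$) and Lemma~\ref{lem:majorization} (the majorization characterization $x\prec y \iff \lambda(x)-\lambda(y)\in\calK^\circ$) as the main engines.

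\textbf{Step 1: the set $\lambda^{-1}(\calD+\calK^\circ)$ contains $\lambda^{-1}(\calD)$ and is convex.}
Containment is immediate since $\vecz\in\calK^\circ$ (as $\calK$ is a cone). For convexity, take $x_1,x_2\in\lambda^{-1}(\calD+\calK^\circ)$, so $\lambda(x_i) = u_i + w_i$ with $u_i\in\calD$, $w_i\in\calK^\circ$, and let $x = tx_1+(1-t)x_2$ for $t\in[0,1]$. By Lemma~\ref{lem:gowda}\ref{item:sum_majorization} (applied to the two points $tx_1$ and $(1-t)x_2$, using positive homogeneity of $\lambda$ from part~\ref{item:ph}), $\lambda(x) - (t\lambda(x_1)+(1-t)\lambda(x_2))\in\calK^\circ$. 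Hence $\lambda(x) = \big(tu_1+(1-t)u_2\big) + \big(tw_1+(1-t)w_2\big) + \big(\lambda(x)-t\lambda(x_1)-(1-t)\lambda(x_2)\big)$. The first bracket lies in $\calD$ by convexity of $\calD$; the last two brackets lie in $\calK^\circ$ since $\calK^\circ$ is a convex cone. Therefore $\lambda(x)\in\calD+\calK^\circ$, proving convexity.

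\textbf{Step 2: $\conv\lambda^{-1}(\calD) \subseteq \lambda^{-1}(\calD+\calK^\circ)$.}
Since the right-hand side is convex (Step 1) and contains $\lambda^{-1}(\calD)$, it contains the convex hull.

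\textbf{Step 3: the reverse inclusion $\lambda^{-1}(\calD+\calK^\circ)\subseteq\conv\lambda^{-1}(\calD)$.}
This is the crux. Fix $x$ with $\lambda(x) = u+w$, $u\in\calD\subseteq\calK$, $w\in\calK^\circ$. I want $x\in\conv\lambda^{-1}(\calD)$. Pick any $v\in\lambda^{-1}(u)$ (possible since $u\in\calK=\ran\lambda$); then $v\in\lambda^{-1}(\calD)$ and $\lambda(x)-\lambda(v) = w\in\calK^\circ$, so by Lemma~\ref{lem:majorization} we get $x\prec v$, i.e., $x\in\conv[v] = \conv\lambda^{-1}(\lambda(v))\subseteq\conv\lambda^{-1}(\calD)$. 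This closes the loop and proves \eqref{eq:conv_D}; the displayed set-builder form on the far right is just the definition of $\lambda^{-1}(\calD+\calK^\circ)$ rewritten, using $\lambda(x)-u\in\calK^\circ \iff \lambda(x)\in u+\calK^\circ$.

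\textbf{Step 4: the closed-convex-hull consequences.}
Taking closures in \eqref{eq:conv_D} gives $\clconv\lambda^{-1}(\calD) = \cl\conv\lambda^{-1}(\calD) = \cl\lambda^{-1}(\calD+\calK^\circ)$, using that the closed convex hull of a set equals the closure of its convex hull. If moreover $\calD+\calK^\circ$ is closed, then since $\lambda$ is continuous, $\lambda^{-1}(\calD+\calK^\circ)$ is closed, so the closure operation is vacuous and \eqref{eq:conv_eq_lambda-1} follows.

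\textbf{Expected main obstacle.}
The only genuinely nontrivial step is Step 3, and there the subtlety is making sure the majorization lemma applies verbatim: Lemma~\ref{lem:majorization} is stated for $x,y\in\bbV$ and gives $x\in\conv[y]$ from $\lambda(x)-\lambda(y)\in\calK^\circ$, so one must first realize the element $u\in\calD$ as $\lambda(v)$ for a genuine preimage $v\in\bbV$ — which is exactly where $\calD\subseteq\calK = \ran\lambda$ is used. A secondary point worth stating carefully is that Step 1 relies on Lemma~\ref{lem:gowda}\ref{item:sum_majorization} with $k=2$ applied to the \emph{scaled} points $tx_1,(1-t)x_2$ rather than to $x_1,x_2$; positive homogeneity (part~\ref{item:ph}) is what lets us write $t\lambda(x_i)=\lambda(tx_i)$ and thereby land in the form the lemma requires. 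Everything else is bookkeeping with the convex-cone properties of $\calK^\circ$ and the continuity of $\lambda$.
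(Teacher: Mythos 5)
Your proof is correct and follows essentially the same route as the paper: the same convexity argument via Lemma~\ref{lem:gowda}\ref{item:ph},\ref{item:sum_majorization}, the same easy inclusion, and the same key step using Lemma~\ref{lem:majorization} after realizing $u\in\calD\subseteq\calK=\ran\lambda$ as $\lambda(v)$. The only cosmetic difference is that the paper isolates this last step as a separate lemma ($\conv\lambda^{-1}(u)=\lambda^{-1}(u+\calK^\circ)$, Lemma~\ref{lem:conv_preimage}), which you have simply inlined.
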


\noindent
The proof of Theorem~\ref{thm:main} leverages the following simple lemma. 

 \begin{lemma} \label{lem:conv_preimage}
For any $u \in { \cal K }$, we have $\conv\lambda^{-1}( u ) = \lambda^{-1} ( u + \calK^{\circ}  )$. 
\end{lemma}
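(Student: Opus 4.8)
The plan is to reduce the claimed identity directly to the majorization characterization already recorded in Lemma~\ref{lem:majorization}, so essentially no new work is needed. First I would observe that since $u\in\calK=\ran\lambda$, the preimage $\lambda^{-1}(u)$ is nonempty; fix any $y\in\bbV$ with $\lambda(y)=u$, so that $\lambda^{-1}(u)=[y]$ is precisely the $\lambda$-orbit of $y$. Then, by the definition of majorization, $\conv\lambda^{-1}(u)=\conv[y]=\{x\in\bbV:x\prec y\}$.

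Next I would invoke the equivalence (i)$\Leftrightarrow$(iii) in Lemma~\ref{lem:majorization}: for $x\in\bbV$ we have $x\prec y$ if and only if $\lambda(x)-\lambda(y)\in\calK^\circ$, that is, $\lambda(x)-u\in\calK^\circ$, i.e.\ $\lambda(x)\in u+\calK^\circ$, i.e.\ $x\in\lambda^{-1}(u+\calK^\circ)$. Chaining these equalities yields
\[
\conv\lambda^{-1}(u)=\{x\in\bbV:x\prec y\}=\lambda^{-1}(u+\calK^\circ),
\]
which is the assertion.

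Since every link in the chain is either a definition (of $[y]$ and of majorization) or a verbatim application of Lemma~\ref{lem:majorization} (whose own proof lies outside this excerpt and may be taken as given), there is no real obstacle here. The only point requiring a moment's care is the nonemptiness of $\lambda^{-1}(u)$: this is exactly what allows $\conv\lambda^{-1}(u)$ to be identified with $\conv[y]$ for a genuine orbit representative $y$, and it is guaranteed precisely by the hypothesis $u\in\calK=\ran\lambda$.
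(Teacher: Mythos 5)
Your proof is correct and follows essentially the same route as the paper: fix $y\in\lambda^{-1}(u)$ (nonempty since $u\in\calK=\ran\lambda$), identify $\conv\lambda^{-1}(u)$ with $\conv[y]$, and apply the equivalence $x\prec y \Leftrightarrow \lambda(x)-\lambda(y)\in\calK^\circ$ from Lemma~\ref{lem:majorization}. Your explicit note on nonemptiness is a minor point the paper leaves implicit; otherwise the arguments coincide.
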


\begin{proof}
 Let $y \in \lambda^{-1} (u)$. Note that $x \in \lambda^{-1}(  u + \calK ^ { \circ } )$ if and only if $\lambda ( x ) - \lambda(y)=\lambda ( x ) - u \in \calK ^ { \circ }$, which amounts to $x\prec y$ % so that
% or equivalently,  $\lambda(y)=u$. 
  by Lemma~\ref{lem:majorization}. By definition, this amounts to  $x\in \conv[y] = \conv\lambda^{-1}(u)$. %$\lambda ( a ) - u \in K ^ { \circ }$ is equivalent to $a \prec b$ where $b \in \lambda ^ { - 1 } ( u )$ , which is equivalent to $a \in \mathrm { c o n v } \lambda ^ { - 1 } ( u )$ . 
\end{proof}

\begin{proof}[Proof of Theorem~\ref{thm:main}]
We first show that $\lambda^{-1}(\calD + \mathcal {K} ^ {\circ})$ is convex.  Fix any $x _ { 1 } , x _ { 2 } \in \lambda^{-1}(\calD + \mathcal {K} ^ {\circ})$  and any $t \in [ 0 , 1 ]$, so  that there exist $u _ { 1 } , u _ { 2 } \in \mathcal { D }$ such that $\lambda(x _ { i }) - u_i \in \mathcal {K} ^ {\circ}$ for $i=1,2$. 
Since $\mathcal {K} ^ {\circ}$ is convex,  we have 
\begin{equation}
(t\lambda(x _ {1 }) + (1-t) \lambda(x _ { 2 })) - (tu_1 + (1-t)u_2 ) =t(\lambda(x _ {1 }) - u_1) + (1-t) (\lambda(x _ { 2 }) - u_2)\in \mathcal {K} ^ {\circ}.  \label{eq:cvx_comb}
\end{equation}
 Also,  by Lemma~\ref{lem:gowda}\ref{item:ph} and~\ref{item:sum_majorization}, we have 
\begin{equation}
 \lambda ( t x _ { 1 } + (1-t) x _ { 2 } ) - ( t \lambda ( x _ { 1 } ) + (1-t) \lambda ( x _ { 2 } ))  \in \calK^\circ. \label{eq:major_conv_comb}
\end{equation}
Since $\mathcal {K} ^ {\circ}$ is a convex cone and $\calD$ is convex, combining~\eqref{eq:cvx_comb} and~\eqref{eq:major_conv_comb}, we have  
\begin{equation}
\lambda ( t x _ { 1 } + (1-t) x _ { 2 } ) \in (tu_1 + (1-t)u_2 ) + \mathcal {K} ^ {\circ} \subseteq \calD + \mathcal {K} ^ {\circ},
\end{equation}
or equivalently, $t x _ { 1 } + (1-t) x _ { 2 }\in \lambda^{-1}(\calD + \mathcal {K} ^ {\circ})$. 

Next, note that  the second equality in~\eqref{eq:conv_D} follows directly from the definition, i.e., 
\begin{align}
\{x \in \mathbb {V}: \exists\; u \in \mathcal {D}&\st \lambda (x) - u \in \mathcal {K} ^ {\circ} \} = \{x \in \mathbb {V}: \exists\; u \in \mathcal {D}\st  x  \in \lambda^{-1}(u+\mathcal {K} ^ {\circ}) \}\\
& ={\bigcup}_ {u \in \mathcal {D}}\; \lambda^{-1}(u+\mathcal {K} ^ {\circ}) =\lambda^{-1}\Big({\bigcup}_ {u \in \mathcal {D}}\;  (u+\mathcal {K} ^ {\circ})\Big) = \lambda^{-1}(\mathcal {D}+\mathcal {K} ^ {\circ}). \label{eq:lambda-1_D+Kcirc}
\end{align}
To show the first equality in~\eqref{eq:conv_D}, first note that since $\lambda^ {- 1} (\mathcal {D}) \subseteq  \lambda^{-1}(\calD+\calK^\circ)$ and $\lambda^{-1}(\calD+\calK^\circ)$ is convex, we have $\conv\lambda^ {- 1} (\mathcal {D}) \subseteq  \lambda^{-1}(\calD+\calK^\circ)$. Next, note that by Lemma~\ref{lem:conv_preimage},   for any $u\in\calD$, we have $\lambda^{-1}(u+\mathcal {K} ^ {\circ}) = \conv\lambda^{-1}( u )\subseteq\conv\lambda^{-1}( \calD )$. Hence, by~\eqref{eq:lambda-1_D+Kcirc}, we have $\lambda^{-1}(\mathcal {D}+\mathcal {K} ^ {\circ})\subseteq \conv\lambda^{-1}( \calD )$. 
Lastly, note that if $\calD+\calK^\circ$ is closed, by the continuity of $\lambda$, we know that $\lambda^{-1}(\calD+\calK^\circ)$ is closed, and we arrive at~\eqref{eq:conv_eq_lambda-1}. 
\end{proof}

\begin{remark}[Closedness of $\calD+\calK^\circ$]
Note that two common conditions that ensure $\calD+\calK^\circ$ to be closed are i) $\calD$ is compact and ii) both $\calD$ and $\calK$ are polyhedral. %, then we have 
\end{remark}

\subsection{First Result}

\noindent 
As mentioned in the introduction, our interest is certainly not limited to convexifying $\lambda^ {- 1} (\mathcal {D})$ for nonempty and convex $\calD\subseteq \calK$, but actually for any feasible $\calC\subseteq \bbW$. This leads to the following theorem. 

\begin{theorem} \label{thm:feasible}
Let $\calC\subseteq \bbW$ be feasible. Then 
\begin{equation}
\conv\lambda^{-1}(\calC) %=\conv\lambda^{-1}(\conv (\calC\cap\calK)) 
= \lambda^{-1}(\conv (\calC\cap\calK)+\calK^\circ) = \{x \in \mathbb {V}: \exists\; u \in\conv (\calC\cap\calK)\st \lambda (x) - u \in \mathcal {K} ^ {\circ} \}.  \label{eq:feasible_conv_hull}
\end{equation}
Also, for any convex set $\calD$ such that $\conv (\calC\cap\calK) \subseteq \calD\subseteq \clconv (\calC\cap\calK)$, we have
\begin{equation}
\clconv\lambda^{-1}(\calC) %=\clconv\lambda^{-1}(\conv (\calC\cap\calK)) 
= \cl\lambda^{-1}(\calD+\calK^\circ). \label{eq:closed_cvx_hull} 
\end{equation}
In particular, if $\calD+\calK^\circ$ is closed, then $\clconv\lambda^{-1}(\calC) = \lambda^{-1}(\calD+\calK^\circ)$. %we have the following:
%\begin{enumerate}[label=(\roman*),leftmargin=1.5\parindent]
%\item if $\calD+\calK^\circ$ is closed, then $\clconv\lambda^{-1}(\calC) = \lambda^{-1}(\calD+\calK^\circ)$, and
%\item if $\calC\cap\calK$ is compact, then 
%\end{enumerate}
% i)  ii) . 
%\begin{enumerate}[label=(\roman*),leftmargin=1.5\parindent]
%\item \label{item:conv_eq_i}  $\conv\lambda^{-1}(\calC) =\conv\lambda^{-1}(\conv (\calC\cap\calK)) = \lambda^{-1}(\conv (\calC\cap\calK)+\calK^\circ)$. 
%\item \label{item:conv_eq_ii} For any $\calC\cap\calK\subseteq \calD \subseteq \clconv (\calC\cap\calK)$, we have $\clconv\lambda^{-1}(\calC) = \clconv\lambda^{-1}(\calD)$. 
%\end{enumerate}
\end{theorem}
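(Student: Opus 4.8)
\textbf{Proof plan for Theorem~\ref{thm:feasible}.} The plan is to reduce everything to Theorem~\ref{thm:main} applied with $\calD := \conv(\calC\cap\calK)$, which is a nonempty (by feasibility) convex subset of $\calK$ (since $\calK$ is convex and $\calC\cap\calK\subseteq\calK$). The only genuine work is to show that passing from $\calC$ to $\conv(\calC\cap\calK)$ does not change the convex hull of the preimage, i.e.\ that
\begin{equation}
\conv\lambda^{-1}(\calC) = \conv\lambda^{-1}\bigl(\conv(\calC\cap\calK)\bigr). \label{eq:reduction}
\end{equation}
Once \eqref{eq:reduction} is in hand, the first display \eqref{eq:feasible_conv_hull} is exactly \eqref{eq:conv_D} from Theorem~\ref{thm:main} with this choice of $\calD$.

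For \eqref{eq:reduction}, first recall $\lambda^{-1}(\calC) = \lambda^{-1}(\calC\cap\calK)$ by definition of a spectral set, so I may replace $\calC$ by $\calC\cap\calK$ throughout. The inclusion $\lambda^{-1}(\calC\cap\calK)\subseteq\lambda^{-1}(\conv(\calC\cap\calK))$ is immediate from $\calC\cap\calK\subseteq\conv(\calC\cap\calK)$, giving ``$\subseteq$'' in \eqref{eq:reduction} after taking convex hulls. For the reverse inclusion, the key point is that $\lambda^{-1}$ does not distribute over $\conv$ in general, but it does \emph{after} one more application of $\conv$: I would show $\lambda^{-1}(\conv(\calC\cap\calK))\subseteq\conv\lambda^{-1}(\calC\cap\calK)$. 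To see this, take $x$ with $\lambda(x)\in\conv(\calC\cap\calK)$, write $\lambda(x) = \sum_i t_i u_i$ with $u_i\in\calC\cap\calK\subseteq\calK$, $t_i\ge 0$, $\sum_i t_i = 1$. By Lemma~\ref{lem:gowda}\ref{item:sum_majorization} applied to the scaled points $t_i$-weighted (using positive homogeneity, Lemma~\ref{lem:gowda}\ref{item:ph}), $\lambda(x) - \sum_i t_i\lambda(\cdot)$-type majorization arguments do not quite apply directly since $x$ is a single point; instead the cleaner route is: since each $u_i\in\calK = \ran\lambda$, pick $y_i\in\lambda^{-1}(u_i)\subseteq\lambda^{-1}(\calC\cap\calK)$, and observe that $\lambda(x) = \sum_i t_i u_i = \sum_i t_i\lambda(y_i)$, so by Lemma~\ref{lem:majorization} (with the convex combination playing the role of a majorizing-type bound via Lemma~\ref{lem:gowda}\ref{item:sum_majorization}, $\lambda(x)-\sum_i t_i\lambda(y_i) = 0\in\calK^\circ$ trivially), $x\prec$ the point $\sum_i t_i y_i$ in the sense that $x\in\conv[\,\cdot\,]$; more directly, $x\in\lambda^{-1}(u)$ for $u=\sum_i t_i\lambda(y_i)$, and by Lemma~\ref{lem:conv_preimage} together with $\conv\lambda^{-1}(u) = \lambda^{-1}(u+\calK^\circ)$ one chases $x$ into $\conv\lambda^{-1}(\calC\cap\calK)$. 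Taking $\conv$ of both sides and combining with the easy inclusion yields \eqref{eq:reduction}, hence \eqref{eq:feasible_conv_hull}.

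For the closed convex hull statement \eqref{eq:closed_cvx_hull}: from \eqref{eq:feasible_conv_hull}, $\conv\lambda^{-1}(\calC) = \lambda^{-1}(\conv(\calC\cap\calK)+\calK^\circ)$, so $\clconv\lambda^{-1}(\calC) = \cl\lambda^{-1}(\conv(\calC\cap\calK)+\calK^\circ)$. I then need to show this equals $\cl\lambda^{-1}(\calD+\calK^\circ)$ for any $\calD$ sandwiched between $\conv(\calC\cap\calK)$ and $\clconv(\calC\cap\calK)$. The lower bound $\conv(\calC\cap\calK)\subseteq\calD$ gives $\lambda^{-1}(\conv(\calC\cap\calK)+\calK^\circ)\subseteq\lambda^{-1}(\calD+\calK^\circ)$, hence one ``$\subseteq$'' between closures. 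For the other direction, use $\calD\subseteq\clconv(\calC\cap\calK)$: I would argue $\lambda^{-1}(\calD+\calK^\circ)\subseteq\cl\lambda^{-1}(\conv(\calC\cap\calK)+\calK^\circ)$. The point is that $\clconv(\calC\cap\calK)+\calK^\circ\subseteq\cl(\conv(\calC\cap\calK)+\calK^\circ)$ (closure of a sum contains sum of closure of one summand with the other), and $\lambda^{-1}$ of a set is contained in $\lambda^{-1}(\cl(\cdot))$, whose closure... here I must be slightly careful: $\lambda^{-1}(\cl S)$ need not be $\cl\lambda^{-1}(S)$ for general $S$, but since $\conv(\calC\cap\calK)+\calK^\circ$ is convex and $\lambda^{-1}$ of a convex subset of... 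Actually the cleanest argument: $\calD+\calK^\circ\subseteq\clconv(\calC\cap\calK)+\calK^\circ\subseteq\cl\bigl(\conv(\calC\cap\calK)+\calK^\circ\bigr)$, and since $\lambda$ is continuous (Lemma~\ref{lem:gowda}), $\lambda^{-1}$ of a closed set is closed, so $\lambda^{-1}\bigl(\cl(\conv(\calC\cap\calK)+\calK^\circ)\bigr)$ is closed and contains $\lambda^{-1}(\calD+\calK^\circ)$; it remains to check this closed set equals $\cl\lambda^{-1}(\conv(\calC\cap\calK)+\calK^\circ)$, i.e.\ that $\lambda^{-1}$ commutes with closure on the convex set $\conv(\calC\cap\calK)+\calK^\circ$, which follows from the surjectivity of $\lambda$ onto $\calK$ plus a density/continuity argument (approximate a point of $\cl S$ by points of $S$ inside $\calK$, lift through $\lambda$ using that preimages vary ``continuously''—or more robustly, invoke \eqref{eq:conv_D}/\eqref{eq:conv_eq_lambda-1} of Theorem~\ref{thm:main} twice: $\cl\lambda^{-1}(S) = \clconv\lambda^{-1}(S') $ type identities force the two closed convex sets to coincide since they have the same support function by Lemma~\ref{lem:sup_spectral}).

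\textbf{Main obstacle.} The delicate point is \eqref{eq:reduction}: the identity $\conv\lambda^{-1}(\calC) = \conv\lambda^{-1}(\conv(\calC\cap\calK))$, specifically the inclusion $\lambda^{-1}(\conv(\calC\cap\calK))\subseteq\conv\lambda^{-1}(\calC\cap\calK)$, since $\lambda^{-1}$ interacts subtly with convex combinations of \emph{different} orbits. I expect the slickest route is via support functions and Lemma~\ref{lem:sup_spectral}: both $\conv\lambda^{-1}(\calC)$ and $\lambda^{-1}(\conv(\calC\cap\calK)+\calK^\circ)$ are shown to have identical support function $c\mapsto\sup_{u\in\calC\cap\calK}\ipt{\lambda(c)}{u}$ (the left side by Lemma~\ref{lem:sup_spectral} directly; the right side by Theorem~\ref{thm:main} since it is $\conv\lambda^{-1}(\conv(\calC\cap\calK))$ whose support function over the convex set $\conv(\calC\cap\calK)\subseteq\calK$ is again $\sup_{u\in\conv(\calC\cap\calK)}\ipt{\lambda(c)}{u} = \sup_{u\in\calC\cap\calK}\ipt{\lambda(c)}{u}$, using that $\lambda(c)$ is a fixed linear functional and linear functionals attain their sup over $\conv$ at the original set). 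Equality of support functions of two closed convex sets gives equality of closed convex hulls; combined with the elementary convexity from Theorem~\ref{thm:main} (which shows $\lambda^{-1}(\conv(\calC\cap\calK)+\calK^\circ)$ is already convex and contains $\lambda^{-1}(\calC)$), one upgrades to the exact identity \eqref{eq:feasible_conv_hull}, not merely its closure.
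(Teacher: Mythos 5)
Your reduction to Theorem~\ref{thm:main} with $\calD=\conv(\calC\cap\calK)$, and your handling of the closed-hull statement via support functions (Lemma~\ref{lem:sup_spectral}), are in the same spirit as the paper, which proves the closed case by strong separation in Proposition~\ref{prop:comv_hull_eq}(ii). But the first identity in \eqref{eq:feasible_conv_hull} --- the \emph{non-closed} convex hull --- is exactly where your proposal has a genuine gap, and you have correctly located it yourself: the inclusion $\lambda^{-1}(\conv(\calC\cap\calK))\subseteq\conv\lambda^{-1}(\calC)$. Your two attempts at it do not work. First, the majorization route points the wrong way: from $\lambda(x)=\sum_i t_i\lambda(y_i)$ and Lemma~\ref{lem:gowda}\ref{item:sum_majorization} you get $\lambda\bigl(\sum_i t_i y_i\bigr)-\lambda(x)\in\calK^\circ$, i.e.\ $\sum_i t_i y_i\prec x$ by Lemma~\ref{lem:majorization}, not $x\prec\sum_i t_i y_i$; the former puts $\sum_i t_i y_i$ in $\conv[x]$, which is useless. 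Second, the ``chase'' through Lemma~\ref{lem:conv_preimage} fails because for $u=\sum_i t_iu_i\in\conv(\calC\cap\calK)$ the orbit $\lambda^{-1}(u)$ is generally \emph{not} contained in $\lambda^{-1}(\calC\cap\calK)$, so $\conv\lambda^{-1}(u)=\lambda^{-1}(u+\calK^\circ)$ gives no containment in $\conv\lambda^{-1}(\calC)$. Third, your fallback via support functions cannot close this gap either: equal support functions identify only the \emph{closed} convex hulls, and since $\conv\lambda^{-1}(\calC)$ need not be closed, knowing it is contained in the convex set $\lambda^{-1}(\conv(\calC\cap\calK)+\calK^\circ)$ and has the same closure does not upgrade to equality (an open ball inside a closed ball has the same closure).

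The missing ingredient is the paper's Lemma~\ref{lem:lambda-1_conv}, whose proof is genuinely pointwise and uses axiom~\ref{P2}: writing $\lambda(x)=\sum_i t_iu_i$ with $u_i\in\calC\cap\calK$, one chooses for each $i$ an $x_i\in\lambda^{-1}(u_i)$ \emph{aligned with $x$}, i.e.\ $\ipt{x}{x_i}=\ipt{\lambda(x)}{u_i}$; then Cauchy--Schwarz gives $\normt{x}\,\normt{\sum_i t_ix_i}\ge\ipt{x}{\sum_i t_ix_i}=\normt{x}^2$, while Lemma~\ref{lem:gowda}\ref{item:sum_majorization} and norm preservation give $\normt{\sum_i t_ix_i}\le\normt{x}$, forcing equality in Cauchy--Schwarz and hence the exact identity $x=\sum_i t_ix_i\in\conv\lambda^{-1}(\calC)$, with no closure taken anywhere. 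With that lemma in hand, the paper's Proposition~\ref{prop:comv_hull_eq}(i) plus Theorem~\ref{thm:main} yields \eqref{eq:feasible_conv_hull}; your argument as written establishes only the closed-convex-hull statement \eqref{eq:closed_cvx_hull}.
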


\noindent
Before proving Theorem~\ref{thm:feasible}, let us provide some  examples and remarks. 

\begin{example} \label{eg:two_pt}
To illustrate Theorem~\ref{thm:feasible}, let us consider %the  following example. 
 the FTvN system $(\bbR^2,\bbR^2,\lambda)$, where $\lambda(x) = x^\downarrow$. In this case, $\calK = \bbR^2_\downarrow$ and $\calK^\circ = \{u\in\bbR^2: u_1\le 0,\; u_2= -u_1\} = \{ (-\alpha,\alpha):\alpha\ge 0\}$. Let $\calC:= \{(1,0),(1,2)\}$. Then 
\begin{equation}
\conv\lambda^{-1}(\calC) = \conv\lambda^{-1}(\calC\cap\calK) = \conv\lambda^{-1}((1,0)) = \conv\{(1,0),(0,1)\} = \Delta_2. %\{(\alpha,1-\alpha):0\le \alpha\le 1\}. 
\nn
\end{equation}
Since $\conv\lambda^{-1}(\calC)$ is a closed line-segment, we clearly have $\clconv\lambda^{-1}(\calC) = \conv\lambda^{-1}(\calC)$. 
In addition, 
\begin{align*}
\lambda^{-1}(\conv (\calC\cap\calK)+\calK^\circ) &= \lambda^{-1}(\{ (1-\alpha,\alpha):\alpha\ge 0\})\\
 &=  \lambda^{-1}(\{ (1-\alpha,\alpha):0\le \alpha\le 1/2\}) =\{ (\alpha,1-\alpha):0\le \alpha\le 1\} = \Delta_2.  
%((1,0)+\calK^\circ)
\end{align*}
%$\lambda^{-1}(\calC) = \lambda^{-1}(\calC\cap\calK) = \lambda^{-1}((1,0))$
Therefore~\eqref{eq:feasible_conv_hull} holds. %Now, note that 
Note that although the setting above is extremely simple, %as we shall see later, 
examples of this type turn out to be quite effective in offering geometric intuitions and constructing counterexamples of our various convexifications of $\lambda^{-1}(\calC)$ in %this work. %
the following. 
%it oftentimes con which is perhaps the simplest nontrivial example that one can think of. 
\end{example}

\begin{remark} \label{rmk:cl_nec}
Note that the closure operation on the right-hand side of~\eqref{eq:closed_cvx_hull} is necessary in general,
since there exist examples where $\lambda^{-1}(\calD+\calK^\circ)$ is not closed if  $\calD+\calK^\circ$ is not closed. 
%Note that  if, then  and hence . 
To see this, consider again the FTvN system in Example~\ref{eg:two_pt}, but let $\calC = \{(u_1,u_2): u_2^2 \le u_1^2-1, \;u_1\ge 1\}\subseteq\calK:= \bbR^2_\downarrow$, which is closed and convex. In this case, $\conv (\calC\cap\calK) =\clconv (\calC\cap\calK) = \calC$, and hence $\calD = \calC$. However, note that $\calD+\calK^\circ = \calH:=\{(u_1,u_2): u_2 > -u_1\}$, which is an open half-space (and hence  not closed). Moreover, note that $\lambda^{-1}(\calD+\calK^\circ) = \calH$, which is of course, not closed. 
%\begin{align*}
%\lambda^{-1}(\calC+\calK^\circ) &=  \lambda^{-1}() = \{(u_1,u_2): u_2 > -u_1\},
%\end{align*}
%which is not closed. 
\end{remark}

%let us consider 
\begin{remark} \label{rmk:conv_before_after}
Perhaps as a   subtle point, note that we cannot replace $\conv (\calC\cap\calK)$ by $(\conv \calC)\cap\calK$ in~\eqref{eq:feasible_conv_hull} in general. For example, in Example~\ref{eg:two_pt}, %and note that 
we have $(\conv \calC)\cap\calK = \{(1,\beta):0\le \beta\le 1\}$. Thus %As a result, 
\begin{align*}
\lambda^{-1}((\conv \calC)\cap\calK+\calK^\circ) &= \lambda^{-1}(\{ (1-\alpha,\beta+\alpha):\alpha\ge 0, \;0\le \beta\le 1\})\\
 &=  \lambda^{-1}(\{ (u_1,u_2):u_2\le u_1\le 1,\; u_1+u_2\ge 1\})\\
 & =\{ (u_1,u_2):u_1, u_2\le 1,\; u_1+u_2\ge 1\} . 
\end{align*}
As we can see, $\conv\lambda^{-1}(\calC) \ne  \lambda^{-1}(((\conv \calC)\cap\calK)+\calK^\circ)$ in this case. 
%That said, 
On the other hand, note that if $\calC:= \{(1,0),(0,1)\}$, then $\conv\lambda^{-1}(\calC) = \Delta_2$ and 
\begin{align*}
\lambda^{-1}(((\conv \calC)\cap\calK)+\calK^\circ) &= \lambda^{-1}(\{ (\beta-\alpha,1-\beta+\alpha):\alpha\ge 0, \;1/2\le \beta\le 1\})\\
 &=  \lambda^{-1}(\{ (u,1-u):1/2\le u\le 1\}) = \Delta_2 = \conv\lambda^{-1}(\calC). 
% & =\{ (u_1,u_2):u_1, u_2\le 1,\; u_1+u_2\ge 1\} . 
\end{align*}
This reveals that under certain conditions, we can indeed  replace $\conv (\calC\cap\calK)$ by $(\conv \calC)\cap\calK$ in~\eqref{eq:feasible_conv_hull}. Such conditions will be revealed in Section~\ref{sec:second}. 
%In this case, we indeed have $\conv\lambda^{-1}(\calC) =  \lambda^{-1}(((\conv \calC)\cap\calK)+\calK^\circ)$. 
\end{remark}

\begin{example} \label{eg:singular_value}
From~\eqref{eq:feasible_conv_hull}, we see that if both $\conv (\calC\cap\calK)$ and $\calK^\circ$ admit simple descriptions, then $\conv\lambda^{-1}(\calC)$ can be simply described. % as well. %admit 
As an example, consider% the example in
~\ref{eg:R^m_n}, where $\calK = \Rndp$ %(cf.~\eqref{eq:Rndp}) 
and 
\begin{equation}
\calK^\circ =  \{y\in\bbR^n: \textstyle\sum_{i=1}^l y_i \le 0, \; \forall\,l\in[n]\}. 
\end{equation}
Let $A\in\bbS_{++}^n$ and $\calC$ be the $k$-sparse ellipsoid induced by $A$, i.e.,  
\begin{equation}
\calC:= \{u\in\bbR^n:\normt{u}_0\le k,\;\;  {u}^\top A u\le 1\},
\end{equation}
where $k\in[n]$. Note that $\calC\cap\calK$ has the following concise representation:
\begin{equation}
\calC\cap\calK = \{u\in\bbR^n:u_1\ge \cdots\ge  u_k\ge 0,\;\; u_{k+1}=\cdots=u_n = 0,\;\;  {u}^\top A u\le 1\},
%{u_{1:k}}^\top A_{[k]} u_{1:k}\le 1\}.
\end{equation}
and it is clear that $\calC\cap\calK$ is nonempty, convex and compact. As such, from Theorem~\ref{thm:feasible}, we have 
\begin{align}
\conv\lambda^{-1}(\calC) &= \clconv\lambda^{-1}(\calC) = \lambda^{-1}((\calC\cap\calK)+\calK^\circ) = \{X \in \bbR^{m\times n}: \;\exists\, v\in \bbR^k \st (X,v)\in \calP\},\nn
\end{align}
{where}
\begin{align}
\begin{split}
\calP:= \{(X,v) \in \bbR^{m\times n}\times \bbR^k:\;\; & v_1\ge \cdots \ge v_k\ge 0,\;\;  {v}^\top [A]_{k,k}\, v\le 1,\\
& \qquad \sigma_l(X) - \textstyle \sum_{i=1}^{\min\{l,k\}} v_i\le 0,\;\; \forall\,l\in[n] \}.
\end{split} \label{eq:P}
\end{align}
%In~\eqref{eq:P}, 
Here $ [A]_{k,k}\in\bbS_{++}^k$ denotes the $k\times k$ leading principle sub-matrix of $A$, and $\sigma_l(X)$ denotes the Ky-Fan $l$-norm of $X$, i.e., the sum of the $l$ largest singular values of $X$. Note that $\calP$ is nonempty, convex and compact, and $\conv\lambda^{-1}(\calC) = \clconv\lambda^{-1}(\calC)$ is the projection of $\calP$ onto the $X$-coordinate, which is clearly  nonempty, convex and compact. %Now, suppose that we have  In addition, if $$
%Let  $\lambda$ denote the singular-value map, 
\end{example}

\begin{remark}
Let us remark how Theorem~\ref{thm:feasible} plays a role in convexifying optimization problems involving ``spectral constraints''. For simplicity,  let us consider the following optimization problem:
\begin{equation}
\min\; f(x)\quad \st x\in\calS, \quad \lambda(x)\in\calC, \tag{CP} \label{eq:cvx_Prob}
\end{equation}
where $f:\bbV\to \bbR\cup\{+\infty\}$ is a proper, closed and convex function, $\calS\subseteq \bbV$ is  nonempty, closed and convex, and $\calC\subseteq \bbW$ is feasible. To convexify the ``spectral constraint'' $\lambda(x)\in\calC$, we replace it with $x\in \conv\lambda^{-1}(\calC) = \lambda^{-1}(\calD+\calK^\circ)$. From the second equality in~\eqref{eq:feasible_conv_hull}, the ``convexified''~\eqref{eq:cvx_Prob} reads: 
%we can write~\eqref{eq:cvx_Prob} into the following ``convexified' form:
\begin{equation}
\min\; f(x)\quad \st x\in\calS, \quad u\in \conv(\calC\cap\calK),\quad \lambda (x) - u \in \mathcal {K} ^ {\circ}, \label{eq:cvx_Prob2}
\end{equation}
which is an optimization problem over $(x,u)\in\bbV\times\bbW$. In many situations, the constraint $\lambda (x) - u \in \mathcal {K} ^ {\circ}$ encodes a nonempty, closed and convex set in $(x,u)$, and furthermore, if $\calC\cap\calK$ is compact, then $\conv(\calC\cap\calK)$ is nonempty, convex and compact. Therefore, the ``convexified'' problem in~\eqref{eq:cvx_Prob2} becomes a convex optimization problem in ``standard'' format, namely minimizing a proper, closed and convex function over a closed and convex set.
As a concrete example, consider the setting in Example~\ref{eg:singular_value}, and then~\eqref{eq:cvx_Prob2} becomes
\begin{align*}
\min\; f(X)\;\; \st X\in\calS, \;\;  v_1\ge \cdots \ge v_k\ge 0,\;\;  {v}^\top [A]_{k,k}\, v\le 1,
 \;\; \sigma_l(X) - \textstyle \sum_{i=1}^{\min\{l,k\}} v_i\le 0,\;\; \forall\,l\in[n]. 
\end{align*}
Indeed, if $\calS = \bbR^{m\times n}$, then the constraint set of this problem simply involves $k$ linear inequalities, one quadratic inequality and $n$ inequalities induced by  closed convex functions. 
 %, and %depending on the sp
%is amenable for most of    % as well. 
%provides us with a 
\end{remark}

\begin{remark}\label{rmk:connection} %[{\rm Connection to the Results in~\cite[Section~1]{Kim_22}}]
%(Connection to Results in) 
Note that under the proper notion of the invariance of $\calC$ (cf.~Example~\ref{eg:inv}), the result in  %assumption that $\calC$ is {\em invariant},  
%the res Theorem~\ref{thm:feasible} 
\eqref{eq:feasible_conv_hull} has been established in~\cite[Section~1]{Kim_22} for three specific FTvN systems $(\bbR^n,\bbR^n,\lambda_i)$, $i=1,2,3$, where for $x\in\bbR^n$, 
\begin{equation}
\lambda_1 (x):= x^\downarrow, \quad \lambda_2 (x):= |x|\quad\andd\quad  \lambda_3 (x):= |x|^\downarrow. \label{eq:three_FTvN} 
\end{equation}
%$\lambda_1: x\mapsto  x^\downarrow$, $\lambda_2: x \mapsto |x|$ and $\lambda_3: x \mapsto |x|^\downarrow$  
%In~\cite[Section~1]{Kim_22}, it was shown that if 
(In fact,  %the statement 
the geometric condition $\lambda (x) - u \in \mathcal {K} ^ {\circ}$ %in~\eqref{eq:feasible_conv_hull} 
was %expressed 
stated algebraically %alternatively 
%via appropriate algebraic (in-)equalities 
in~\cite{Kim_22}.) % used algebraic inequalities to  
%the invariance notions of $\calC$ corresponding to these three cases are \mbox{permutation-invariance}, sign-invariance and permutation- and sign-invariance, respectively.) 
% and $\gamma(\mu) = \mu^\downarrow$, $\abst{\mu}$ and $\abst{\mu}^\downarrow$, respectively, then for any $\calF$ satisfying $\calC\cap\calK \subseteq \calF\subseteq (\conv\calC)\cap\calK$, %we have 
%\begin{equation}
%\conv\calC = \{\mu\in\bbR^n:\;\exists\,u\in\conv\calF \;\;\st \gamma(\mu)-u\in\calK^\circ\}. \label{eq:conv_C}
%\end{equation}
%(In fact, $\mu-u\in\calK^\circ$ was stated algebraically in terms of (weak) majorization and entry-wise inequality in~\cite{Kim_22}.) Since $\gamma\circ\gamma = \gamma$ in all cases above, we can take $\lambda:=\gamma$ in Proposition~\ref{prop:clconv_barS2_diff}, and $\gamma$ becomes $\lambda$-compatible. By taking closure on both sides of~\eqref{eq:conv_C}, it is clear that the resulting characterization of  $\clconv\calC$ is subsumed by the more general result in Proposition~\ref{prop:clconv_barS2_diff}\ref{prop_ii}. 
%In terms of the proof techniques, 
Note that in~\cite{Kim_22}, the results for these three cases were proved in a case-by-case manner by critically exploiting the invariance property of $\calC$ and the  (weak) majorization theory~\cite{Marshall_11}. In contrast, by leveraging the framework of the FTvN system, our result in~\eqref{eq:feasible_conv_hull} significantly improves the results in~\cite{Kim_22}, in two senses: % that %reveals that 
%we can obtain 
i) the same convexification result holds even if $\calC$ {\em has no invariance property at all}, and 
%furthermore, 
ii) the result holds for {\em any} FTvN system (and not limited to the three FTvN systems in~\eqref{eq:three_FTvN}). 
%of $\calC$ actually plays no role 
%As a noteworthy point, the proof techniques in~\cite{Kim_22} are rather different from those   in 
%our proof of Proposition~\ref{prop:clconv_barS2_diff}. Specifically,~\eqref{eq:conv_C} was proved separately  for each of the three cases above in~\cite{Kim_22},  and the proof in each case made use of the special structures of $\gamma$ and $\calK$. In contrast, our proof of Proposition~\ref{prop:clconv_barS2_diff} only uses some general properties of $\gamma$ (i.e., isometry,~\ref{P1} and~\ref{P2}) and $\calK$ (i.e., closedness and convexity), and acts as a unified proof for all the three cases above. 
\end{remark}

%\begin{remark}\label{rmk:adv_clconv_S2_diff} %[Connection to the results in~\cite{Kim_22}]
%%Two remarks are in order. First,   
%Proposition~\ref{prop:clconv_barS2_diff}\ref{prop_ii}  provides another characterization of $\clconv\calS$, namely $\cl\tilcalS_\calD$ with $\calD = \conv(\calC\cap\calK)$. Note that in some cases, $\cl\tilcalS_\calD$ may be simpler to describe than $\bar\calS:= \lambda^{-1}(\clconv\calC)$, as given in Proposition~\ref{prop:clconv_barS2}. 
%For example, let %the range of $\lambda$, i.e., 
%$\calK := (\bbR_+^n)_\downarrow$, $\gamma(\mu):= \abst{\mu}^\downarrow$ and $\calC:= \{\mu\in\bbR^n:\normt{\mu}_0\le k, \normt{\mu}_2\le 1\}$ for some  $1\le k\le n$. % and $p\in[1,+\infty]$. 
%Note that $\clconv\calC$ can be rather complicated to describe (cf.~\cite[Section 3]{Kim_22}). In contrast,  $\calC\cap\calK= \{\mu\in(\bbR_+^n)_\downarrow:  \mu_{k+1}\le 0, \; \normt{\mu}_2\le 1\} $, which is the intersection of the unit $\ell_2$-ball with $n+1$ linear inequalities. Since $\calC\cap\calK$ is convex and compact, we have $\calD = \calC\cap\calK$. Additionally, %Also, note that 
%$\calK^\circ$ has a simple description, i.e., $\calK^\circ=\{\nu\in\bbR^n: \textstyle\sum_{i=1}^k \nu_i \le 0, \, \forall\,k\in[n]\}$. %, which consists of $n$ linear inequalities. 
%Overall, $\tilcalS_\calD$ admits a simple description. Lastly, since $\calD$ is compact, we have $\cl\tilcalS_\calD = \tilcalS_\calD$. 
%\end{remark}

\noindent
%Before proving Theorem~\ref{thm:feasible},
Let us now prove Theorem~\ref{thm:feasible}. To that end, we need the following lemma. %, which plays important roles in proving the subsequent rs. 

\begin{lemma} \label{lem:lambda-1_conv}
Let $\calC\subseteq \bbW$ be feasible. Then $\lambda^{-1}(\conv (\calC\cap\calK))\subseteq \conv\lambda^{-1}(\calC)$. 
\end{lemma}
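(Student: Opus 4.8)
The plan is to prove the inclusion pointwise: I will show that every $x\in\lambda^{-1}(\conv(\calC\cap\calK))$ is a finite convex combination of points of $\lambda^{-1}(\calC)$. Set $u:=\lambda(x)\in\conv(\calC\cap\calK)$ and write $u=\sum_{j=1}^{k}t_j u_j$ with $u_j\in\calC\cap\calK$, $t_j>0$ and $\sum_{j=1}^{k}t_j=1$ (discarding zero coefficients). Since $\calC\cap\calK\subseteq\calK=\ran\lambda$, each $u_j$ lies in the range of $\lambda$, so property~\ref{P2} applied with $c:=x$ yields, for every $j$, a point $x_j\in\bbV$ with $\lambda(x_j)=u_j$ and $\langle x,x_j\rangle=\langle\lambda(x),\lambda(x_j)\rangle=\langle u,u_j\rangle$. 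As $u_j\in\calC$ we have $x_j\in\lambda^{-1}(u_j)\subseteq\lambda^{-1}(\calC)$, hence $\tilde x:=\sum_{j=1}^{k}t_j x_j\in\conv\lambda^{-1}(\calC)$; it therefore suffices to show $\tilde x=x$.

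To establish $\tilde x=x$ I would argue by a norm comparison. On one hand, $\langle x,\tilde x\rangle=\sum_{j=1}^{k}t_j\langle x,x_j\rangle=\sum_{j=1}^{k}t_j\langle u,u_j\rangle=\langle u,u\rangle=\norm{u}^2$, while the norm-preserving property of $\lambda$ gives $\norm{x}=\norm{\lambda(x)}=\norm{u}$. On the other hand, writing $\lambda(t_j x_j)=t_j u_j$ by Lemma~\ref{lem:gowda}\ref{item:ph} and then applying the ``Consequently'' inequality of Lemma~\ref{lem:gowda}\ref{item:sum_majorization} to $t_1 x_1,\dots,t_k x_k$, together with norm-preservation once more, we get $\norm{\tilde x}=\norm{\lambda(\tilde x)}=\norm{\lambda(\textstyle\sum_{j} t_j x_j)}\le\norm{\textstyle\sum_{j} t_j u_j}=\norm{u}$. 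Combining these, $\norm{x-\tilde x}^2=\norm{x}^2-2\langle x,\tilde x\rangle+\norm{\tilde x}^2\le\norm{u}^2-2\norm{u}^2+\norm{u}^2=0$, so $\tilde x=x\in\conv\lambda^{-1}(\calC)$, which proves the claim.

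The crux — and the step I expect to require the most care — is the use of property~\ref{P2} to \emph{align} a representative $x_j$ of each orbit $\lambda^{-1}(u_j)$ with the fixed element $x$: this is exactly what forces the convex combination $\sum_j t_j x_j$ to collapse onto $x$ itself, rather than merely onto some element sharing the spectrum $u$. Indeed, a softer argument using only Lemma~\ref{lem:sup_spectral} and support functions (via $\langle c,x\rangle\le\langle\lambda(c),\lambda(x)\rangle$ from~\ref{P1}) delivers only the weaker containment $\lambda^{-1}(\conv(\calC\cap\calK))\subseteq\clconv\lambda^{-1}(\calC)$, and it is precisely the Cauchy--Schwarz rigidity in the norm computation above (equality of $\norm{x}$ and $\norm{\tilde x}$ paired with $\langle x,\tilde x\rangle=\norm{x}^2$) that upgrades $\clconv$ to $\conv$. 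The remaining ingredients — reducing to a finite convex combination and invoking the positive homogeneity and sub-additivity of $\lambda$ from Lemma~\ref{lem:gowda} — are routine bookkeeping.
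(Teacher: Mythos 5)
Your proof is correct and follows essentially the same route as the paper's: use \ref{P2} with $c=x$ to pick aligned representatives $x_j\in\lambda^{-1}(u_j)$, then force the convex combination $\sum_j t_j x_j$ to equal $x$ via norm-preservation and Lemma~\ref{lem:gowda}\ref{item:sum_majorization}. The only (cosmetic) difference is that you conclude by expanding $\norm{x-\tilde x}^2\le 0$ directly, whereas the paper invokes the equality case of Cauchy--Schwarz.
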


\begin{proof}
Fix any $x\in \lambda^{-1}(\conv (\calC\cap\calK))$. Then there exist $\{u_i\}_{i=1}^k\subseteq \calC\cap\calK$ and $(t_1,\ldots,t_k)\in\Delta_k$ such that $\lambda(x) = \sum_{i=1}^k t_iu_i$. In addition, by~\ref{P2}, for any $i\in[k]$, there exists $x_i\in\lambda^{-1}(u_i)$ such that $\ipt{x}{x_i} = \ipt{\lambda(x)}{\lambda(x_i)}=
\ipt{\lambda(x)}{u_i}$. As a result, by the norm-preserving property of $\lambda$, we have 
\begin{equation}
\textstyle \normt{x}\normt{\sum_{i=1}^k t_ix_i}\ge \ipt{x}{\sum_{i=1}^k t_ix_i} = \sum_{i=1}^k t_i \ipt{x}{x_i} = \sum_{i=1}^k t_i \ipt{\lambda(x)}{u_i} = \normt{\lambda(x)}^2 = \normt{x}^2, \label{eq:CS} 
\end{equation}
which implies that $\normt{\sum_{i=1}^k t_ix_i}\ge \normt{x}$. On the other hand, by Lemma~\ref{lem:gowda}\ref{item:sum_majorization}, we have 
\begin{equation}
\textstyle\normt{\sum_{i=1}^k t_ix_i} = \normt{\lambda(\sum_{i=1}^k t_ix_i)}\le \normt{\sum_{i=1}^k t_i\lambda(x_i)} = \normt{\sum_{i=1}^k t_iu_i}= \normt{\lambda(x)} = \normt{x}. 
%\le \sum_{i=1}^kt_i\normt{ \lambda(x_i)} = \sum_{i=1}^kt_i\normt{x_i}
\end{equation}
Thus $\normt{\sum_{i=1}^k t_ix_i}= \normt{x}$ and by~\eqref{eq:CS}, we have  $\normt{x}\normt{\sum_{i=1}^k t_ix_i}= \ipt{x}{\sum_{i=1}^k t_ix_i}$. This implies that ${x}={\sum_{i=1}^k t_ix_i}$. Since $x_i\in\lambda^{-1}(u_i)\subseteq \lambda^{-1}(\calC\cap\calK)=\lambda^{-1}(\calC)$ for $i\in[k]$, we have $x\in \conv \lambda^{-1}(\calC)$, and consequently, %. This shows that 
$\lambda^{-1}(\conv (\calC\cap\calK))\subseteq\conv \lambda^{-1}(\calC).$
\end{proof}

\noindent
To prove Theorem~\ref{thm:feasible}, we also need the following proposition, which equates $\conv\lambda^{-1}(\calC)$ (resp.\ $\clconv\lambda^{-1}(\calC)$) with $\conv\lambda^{-1}(\calD)$ (resp.\ $\clconv\lambda^{-1}(\calD)$) for some nonempty $\calD\subseteq \calK$. 

 %  which %states the equivalence between 
%Specifically, Proposition~\ref{prop:comv_hull_eq} 
%Based on Lemma~\ref{lem:lambda-1_conv}, we can prove the following proposition. 

\begin{prop}\label{prop:comv_hull_eq}
Let $\calC\subseteq \bbW$ be feasible. Then 
\begin{enumerate}[label=(\roman*),leftmargin=1.5\parindent]
\item \label{item:conv_eq_i} for any $\calD$ that satisfies  $\calC\cap\calK\subseteq \calD \subseteq \conv (\calC\cap\calK)$, we have $\conv\lambda^{-1}(\calC) = \conv\lambda^{-1}(\calD)$, and  
\item \label{item:conv_eq_ii} for any $\calD$ that satisfies $\calC\cap\calK\subseteq \calD \subseteq \clconv (\calC\cap\calK)$, we have $\clconv\lambda^{-1}(\calC) = \clconv\lambda^{-1}(\calD)$. 
\end{enumerate}
\end{prop}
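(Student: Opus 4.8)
The plan is to bracket $\lambda^{-1}(\calC)$ between $\lambda^{-1}(\calD)$ and an appropriate (closed) convex hull, using the two defining inclusions of $\calD$ and invoking Lemma~\ref{lem:lambda-1_conv} for the nontrivial direction. Note first that in both parts $\calD$ lands inside $\calK$, since $\calC\cap\calK\subseteq\calK$ and $\calK$ is a closed convex cone, so $\conv(\calC\cap\calK)\subseteq\clconv(\calC\cap\calK)\subseteq\calK$.

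For part~\ref{item:conv_eq_i}, one containment is free: since $\calC\cap\calK\subseteq\calD$ and $\lambda^{-1}(\calC)=\lambda^{-1}(\calC\cap\calK)$, we get $\lambda^{-1}(\calC)\subseteq\lambda^{-1}(\calD)$, hence $\conv\lambda^{-1}(\calC)\subseteq\conv\lambda^{-1}(\calD)$. For the reverse, $\calD\subseteq\conv(\calC\cap\calK)$ gives $\lambda^{-1}(\calD)\subseteq\lambda^{-1}(\conv(\calC\cap\calK))$, and Lemma~\ref{lem:lambda-1_conv} bounds the right-hand side by $\conv\lambda^{-1}(\calC)$; since that set is convex, $\conv\lambda^{-1}(\calD)\subseteq\conv\lambda^{-1}(\calC)$. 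Equality follows.

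For part~\ref{item:conv_eq_ii}, again $\calC\cap\calK\subseteq\calD$ gives $\clconv\lambda^{-1}(\calC)\subseteq\clconv\lambda^{-1}(\calD)$ at once. For the reverse, since $\calD\subseteq\clconv(\calC\cap\calK)$ and $\clconv\lambda^{-1}(\calC)$ is closed and convex, it suffices to prove
\[
\lambda^{-1}\bigl(\clconv(\calC\cap\calK)\bigr)\subseteq\clconv\lambda^{-1}(\calC);
\]
then $\lambda^{-1}(\calD)\subseteq\clconv\lambda^{-1}(\calC)$ and taking closed convex hulls finishes. The key auxiliary fact is that $\lambda^{-1}(\cl S)\subseteq\cl\bigl(\lambda^{-1}(S)\bigr)$ for every $S\subseteq\calK$. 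Granting this with $S=\conv(\calC\cap\calK)$ and combining with Lemma~\ref{lem:lambda-1_conv},
\[
\lambda^{-1}\bigl(\clconv(\calC\cap\calK)\bigr)\subseteq\cl\bigl(\lambda^{-1}(\conv(\calC\cap\calK))\bigr)\subseteq\cl\bigl(\conv\lambda^{-1}(\calC)\bigr)=\clconv\lambda^{-1}(\calC),
\]
which is exactly what is needed.

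The remaining step, which I regard as the crux, is the auxiliary inclusion. Fix $x$ with $u:=\lambda(x)\in\cl S$ and choose $u_n\in S\subseteq\calK=\ran\lambda$ with $u_n\to u$. Applying property~\ref{P2} with the input vector $c=x$ and the point $u_n\in\ran\lambda$ produces $x_n\in\lambda^{-1}(u_n)$ with $\ipt{x}{x_n}=\ipt{\lambda(x)}{\lambda(x_n)}=\ipt{u}{u_n}$. Using the norm-preserving property ($\normt{x_n}=\normt{u_n}$ and $\normt{x}=\normt{u}$),
\[
\normt{x_n-x}^2=\normt{u_n}^2-2\ipt{u}{u_n}+\normt{u}^2=\normt{u_n-u}^2\longrightarrow 0,
\]
so $x_n\to x$ with $x_n\in\lambda^{-1}(S)$, giving $x\in\cl(\lambda^{-1}(S))$. (Together with the continuity of $\lambda$, this in fact yields $\cl\lambda^{-1}(S)=\lambda^{-1}(\cl S)$ for every $S\subseteq\calK$, although only the one inclusion is used here.) Apart from this ``preimages converge'' claim, both parts are routine chains of inclusions, so I do not anticipate any other obstacle.
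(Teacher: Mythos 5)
Your proof is correct, and part~\ref{item:conv_eq_i} coincides with the paper's argument; the interesting divergence is in part~\ref{item:conv_eq_ii}. The paper establishes the key inclusion $\lambda^{-1}(\clconv(\calC\cap\calK))\subseteq\clconv\lambda^{-1}(\calC)$ by a duality argument: assuming $x\notin\clconv\lambda^{-1}(\calC)$, it applies strong separation and then Lemma~\ref{lem:sup_spectral} to convert $\sup_{y\in\lambda^{-1}(\calC)}\ipt{c}{y}$ into $\sup_{u\in\calC\cap\calK}\ipt{\lambda(c)}{u}$, which contradicts $\ipt{c}{x}\le\ipt{\lambda(c)}{\lambda(x)}\le\sup_{u\in\clconv(\calC\cap\calK)}\ipt{\lambda(c)}{u}$. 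You instead prove the primal, topological fact $\lambda^{-1}(\cl S)\subseteq\cl\lambda^{-1}(S)$ for any $S\subseteq\calK$, by using~\ref{P2} with $c=x$ to lift an approximating sequence $u_n\to\lambda(x)$ in $S$ to $x_n\in\lambda^{-1}(u_n)$ with $\normt{x_n-x}=\normt{u_n-\lambda(x)}\to 0$, and then chain this with Lemma~\ref{lem:lambda-1_conv}; the norm computation is right, and the hypotheses of~\ref{P2} are met since $S\subseteq\calK=\ran\lambda$. Your route is more elementary and self-contained (it reuses exactly the~\ref{P2}-plus-norm-preservation technique behind Lemma~\ref{lem:lambda-1_conv}, avoids Hahn--Banach separation and Lemma~\ref{lem:sup_spectral}, and yields the independently useful identity $\cl\lambda^{-1}(S)=\lambda^{-1}(\cl S)$ for $S\subseteq\calK$ once continuity of $\lambda$ is added). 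The paper's separation argument, on the other hand, is the template that is reused almost verbatim later (in Proposition~\ref{prop:conv_hull_eq2}\ref{item:conv_eq3_ii} and Theorem~\ref{thm:calC_subseteq}), where the relevant sets such as $\cl(\clconv(\calC\cap\calK)+\calK^\circ)$ are no longer subsets of $\calK$ and the support-function calculus does extra work, so it scales better to those refinements.
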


\begin{proof}
First, if $\calC\cap\calK\subseteq \calD$, then $\lambda^{-1}(\calC) = \lambda^{-1}(\calC\cap\calK)\subseteq \lambda^{-1}(\calD)$, and $\conv\lambda^{-1}(\calC) \subseteq \conv\lambda^{-1}(\calD)$.
On the other hand, by Lemma~\ref{lem:lambda-1_conv}, we have 
% Next, we show that 
 $\lambda^{-1}(\conv (\calC\cap\calK))\subseteq \conv\lambda^{-1}(\calC)$. %which implies that 
%Indeed, 
%If this holds, then 
Consequently, for all $\calD \subseteq \conv (\calC\cap\calK)$, we have $\conv\lambda^{-1}(\calD)\subseteq \conv\lambda^{-1}(\calC)$, which  finishes the proof of Part~\ref{item:conv_eq_i}.    %and hence $\conv\lambda^{-1}(\conv (\calC\cap\calK))\subseteq\conv \lambda^{-1}(\calC).

Next, we prove Part~\ref{item:conv_eq_ii}. Using the same reasoning as above, if $\calC\cap\calK\subseteq \calD$, then $\clconv\lambda^{-1}(\calC) \subseteq \clconv\lambda^{-1}(\calD)$. Next, we show that $\lambda^{-1}(\clconv (\calC\cap\calK))\subseteq \clconv\lambda^{-1}(\calC)$,  which then implies that 
%Indeed, If this holds, then 
for all $\calD \subseteq \clconv (\calC\cap\calK)$, we have $\clconv\lambda^{-1}(\calD)\subseteq \clconv\lambda^{-1}(\calC)$, thereby  finishing the proof. Take any $x\in\lambda^{-1}(\clconv (\calC\cap\calK)), $ and suppose that $x\not\in \clconv\lambda^{-1}(\calC)$. Then by strong separation, there exists $c\in\bbV$ such that 
\begin{align}
\textstyle \ipt{c}{x} > \sup_{y\in \clconv\lambda^{-1}(\calC)}\; \ipt{c}{y} = \sup_{y\in \lambda^{-1}(\calC)}\; \ipt{c}{y} = \sup_{u\in \calC\cap\calK}\; \ipt{\lambda(c)}{u}. \label{eq:cx>}
\end{align}
On the other hand, since $\lambda(x)\in \clconv (\calC\cap\calK)$, we have 
\begin{align}
\textstyle \ipt{c}{x}\le \ipt{\lambda(c)}{\lambda(x)}\le \sup_{u\in \clconv (\calC\cap\calK)}\; \ipt{\lambda(c)}{u} = \sup_{u\in \calC\cap\calK}\; \ipt{\lambda(c)}{u}.\label{eq:cx<=}
\end{align}
Combining~\eqref{eq:cx>} and~\eqref{eq:cx<=}, we reach a contradiction.  
%of Part~\ref{item:conv_eq_i}.
\end{proof}

\begin{proof}[Proof of Theorem~\ref{thm:feasible}]
Straightforward combination of Theorem~\ref{thm:main} and Proposition~\ref{prop:comv_hull_eq}. 
\end{proof}
%The  is a straightforward combination of Theorem~\ref{thm:main} and Proposition~\ref{prop:comv_hull_eq} below, and hence is omitted. 

\subsection{Second Result}\label{sec:second}

As we have seen from Remark~\ref{rmk:conv_before_after}, although in general we  cannot replace $\conv (\calC\cap\calK)$ by $(\conv \calC)\cap\calK$ in the characterization of $\conv\lambda^{-1}(\calC)$ in~\eqref{eq:feasible_conv_hull}, we can indeed do so under certain conditions. In fact, the same also applies to  the characterization of $\clconv\lambda^{-1}(\calC)$  in~\eqref{eq:closed_cvx_hull}.  In this section, we will identify such conditions, and also prove their tightness.  
 %for any feasible $\calC$. However, there exist some 

\begin{theorem}\label{thm:calC_subseteq}
Let $\calC\subseteq\bbW$ be feasible.  %such that  Then 
\begin{enumerate}[label=(\roman*),leftmargin=1.5\parindent]
\item \label{item:conv_eq2_i} The following three statements are equivalent: 
\begin{enumerate}[label=(\alph*),leftmargin=1\parindent]
\item \label{item:conv_eq2_i_a} $(\conv\calC)\cap\calK\subseteq \conv(\calC\cap\calK) + \calK^\circ$.
\item \label{item:conv_eq2_i_b} For any convex $\calD$ that satisfies $\conv(\calC\cap\calK)\subseteq \calD \subseteq (\conv \calC)\cap\calK$, we have $\conv\lambda^{-1}(\calC) = \lambda^{-1}(\calD+\calK^\circ).$
\item \label{item:conv_eq2_i_c} %for $\calD =(\conv \calC)\cap\calK$, we have 
$\conv\lambda^{-1}(\calC) = \lambda^{-1}(((\conv \calC)\cap\calK)+\calK^\circ).$
\end{enumerate}
% if and only if  
\item \label{item:conv_eq2_ii} 
The following three statements are equivalent: 
\begin{enumerate}[label=(\alph*),leftmargin=1\parindent]
\item \label{item:conv_eq2_ii_a} $(\clconv\calC)\cap\calK\subseteq \cl(\clconv(\calC\cap\calK) + \calK^\circ)$.
\item \label{item:conv_eq2_ii_b} For any convex $\calD$ that satisfies $\conv(\calC\cap\calK)\subseteq \calD \subseteq (\clconv \calC)\cap\calK$, we have $\clconv\lambda^{-1}(\calC) = \cl\lambda^{-1}(\calD+\calK^\circ).$
\item \label{item:conv_eq2_ii_c} %for $\calD =(\conv \calC)\cap\calK$, we have 
$\clconv\lambda^{-1}(\calC) = \cl\lambda^{-1}(((\clconv \calC)\cap\calK)+\calK^\circ).$
\end{enumerate}
%If $(\clconv\calC)\cap\calK\subseteq \cl(\clconv(\calC\cap\calK) + \calK^\circ)$, then for any convex $\calD$ that satisfies $\conv(\calC\cap\calK)\subseteq \calD \subseteq (\clconv \calC)\cap\calK$, we have $\clconv\lambda^{-1}(\calC) = \cl\lambda^{-1}(\calD+\calK^\circ).$ The converse holds if $\calC$ is bounded. %In addition, if $\calD+\calK^\circ$ is closed, then $\clconv\lambda^{-1}(\calC) = \lambda^{-1}(\calD+\calK^\circ)$. 
\end{enumerate}
\end{theorem}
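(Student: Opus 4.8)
The plan is to prove, for each of the two parts, the cyclic chain $\ref{item:conv_eq2_i_a}\Rightarrow\ref{item:conv_eq2_i_b}\Rightarrow\ref{item:conv_eq2_i_c}\Rightarrow\ref{item:conv_eq2_i_a}$ (and similarly with the ``ii'' labels). The engine is Theorem~\ref{thm:feasible}, which supplies the two ``reference'' descriptions $\conv\lambda^{-1}(\calC)=\lambda^{-1}(\conv(\calC\cap\calK)+\calK^\circ)$ and $\clconv\lambda^{-1}(\calC)=\cl\lambda^{-1}(\clconv(\calC\cap\calK)+\calK^\circ)$ against which the sharper descriptions in \ref{item:conv_eq2_i_c}/\ref{item:conv_eq2_ii_c} are compared. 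Beyond this, the only facts I will use are that $\calK^\circ$ is a convex cone (so $\bzero\in\calK^\circ$ and $\calK^\circ+\calK^\circ=\calK^\circ$), that $\ran\lambda=\calK$, that $\lambda$ is continuous, and that $\conv(\calC\cap\calK)\subseteq(\conv\calC)\cap\calK$ and $\clconv(\calC\cap\calK)\subseteq(\clconv\calC)\cap\calK$, so that every $\calD$ appearing in \ref{item:conv_eq2_i_b}/\ref{item:conv_eq2_ii_b} is a nonempty convex subset of $\calK$.

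For Part~\ref{item:conv_eq2_i}, the implication $\ref{item:conv_eq2_i_a}\Rightarrow\ref{item:conv_eq2_i_b}$ is two inclusions: $\conv(\calC\cap\calK)\subseteq\calD$ gives $\conv\lambda^{-1}(\calC)=\lambda^{-1}(\conv(\calC\cap\calK)+\calK^\circ)\subseteq\lambda^{-1}(\calD+\calK^\circ)$, while \ref{item:conv_eq2_i_a} together with $\calK^\circ+\calK^\circ=\calK^\circ$ gives $\calD+\calK^\circ\subseteq((\conv\calC)\cap\calK)+\calK^\circ\subseteq\conv(\calC\cap\calK)+\calK^\circ$, whence $\lambda^{-1}(\calD+\calK^\circ)\subseteq\conv\lambda^{-1}(\calC)$. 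For $\ref{item:conv_eq2_i_b}\Rightarrow\ref{item:conv_eq2_i_c}$, apply \ref{item:conv_eq2_i_b} with $\calD=(\conv\calC)\cap\calK$. For $\ref{item:conv_eq2_i_c}\Rightarrow\ref{item:conv_eq2_i_a}$, take any $v\in(\conv\calC)\cap\calK$ and, since $\calK=\ran\lambda$, lift it to some $x$ with $\lambda(x)=v$; then $x\in\lambda^{-1}(((\conv\calC)\cap\calK)+\calK^\circ)=\conv\lambda^{-1}(\calC)=\lambda^{-1}(\conv(\calC\cap\calK)+\calK^\circ)$, so $v=\lambda(x)\in\conv(\calC\cap\calK)+\calK^\circ$.

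Part~\ref{item:conv_eq2_ii} follows the same template. The implications $\ref{item:conv_eq2_ii_b}\Rightarrow\ref{item:conv_eq2_ii_c}$ (take $\calD=(\clconv\calC)\cap\calK$) and $\ref{item:conv_eq2_ii_c}\Rightarrow\ref{item:conv_eq2_ii_a}$ (lift $v\in(\clconv\calC)\cap\calK$ to $x\in\lambda^{-1}(v)$; since $v\in((\clconv\calC)\cap\calK)+\calK^\circ$ we get $x\in\lambda^{-1}(((\clconv\calC)\cap\calK)+\calK^\circ)\subseteq\cl\lambda^{-1}(((\clconv\calC)\cap\calK)+\calK^\circ)=\clconv\lambda^{-1}(\calC)=\cl\lambda^{-1}(\clconv(\calC\cap\calK)+\calK^\circ)$, and continuity of $\lambda$ gives $v=\lambda(x)\in\cl(\clconv(\calC\cap\calK)+\calK^\circ)$) are verbatim analogues of the arguments in Part~\ref{item:conv_eq2_i}. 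The only genuinely new point is $\ref{item:conv_eq2_ii_a}\Rightarrow\ref{item:conv_eq2_ii_b}$: the inclusion $\clconv\lambda^{-1}(\calC)\subseteq\cl\lambda^{-1}(\calD+\calK^\circ)$ is immediate from $\conv(\calC\cap\calK)\subseteq\calD$ and Theorem~\ref{thm:feasible}, but for $\lambda^{-1}(\calD+\calK^\circ)\subseteq\clconv\lambda^{-1}(\calC)$ the set-theoretic shortcut of Part~\ref{item:conv_eq2_i} is unavailable, since in general $\lambda^{-1}(\cl S)\not\subseteq\cl\lambda^{-1}(S)$. Here I will argue by strong separation, exactly as in the proof of Proposition~\ref{prop:comv_hull_eq}\ref{item:conv_eq_ii}: if some $x\in\lambda^{-1}(\calD+\calK^\circ)$ were outside the closed convex set $\clconv\lambda^{-1}(\calC)$, pick $c\in\bbV$ with $\ipt{c}{x}>\sup_{y\in\clconv\lambda^{-1}(\calC)}\ipt{c}{y}=\sup_{u\in\calC\cap\calK}\ipt{\lambda(c)}{u}$ (Lemma~\ref{lem:sup_spectral}); write $\lambda(x)=v+w$ with $v\in\calD$ and $w\in\calK^\circ$, so \ref{P1} together with $\ipt{\lambda(c)}{w}\le0$ gives $\ipt{c}{x}\le\ipt{\lambda(c)}{v}$; finally \ref{item:conv_eq2_ii_a} places $v$ in $\cl(\clconv(\calC\cap\calK)+\calK^\circ)$, so approximating $v$ by $p_n+q_n$ with $p_n\in\clconv(\calC\cap\calK)$ and $q_n\in\calK^\circ$ and letting $n\to\infty$ yields $\ipt{\lambda(c)}{v}\le\sup_{u\in\clconv(\calC\cap\calK)}\ipt{\lambda(c)}{u}=\sup_{u\in\calC\cap\calK}\ipt{\lambda(c)}{u}$, contradicting the choice of $c$. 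Taking closures then gives \ref{item:conv_eq2_ii_b}.

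The main obstacle is precisely this separation step in $\ref{item:conv_eq2_ii_a}\Rightarrow\ref{item:conv_eq2_ii_b}$: one cannot propagate the inclusion in \ref{item:conv_eq2_ii_a} through $\lambda^{-1}$ by brute force, and must instead reduce to a comparison of support functions via Lemma~\ref{lem:sup_spectral} and then invoke \ref{item:conv_eq2_ii_a} to dominate the support function of $\calD$ by that of $\calC\cap\calK$. A secondary point worth recording is why the closures in \ref{item:conv_eq2_ii_b}, \ref{item:conv_eq2_ii_c} and on the right-hand side of \ref{item:conv_eq2_ii_a} cannot be dropped: $\lambda^{-1}(\calD+\calK^\circ)$ need not be closed (cf.\ Remark~\ref{rmk:cl_nec}), so the ``unclosed'' versions of \ref{item:conv_eq2_ii_b}/\ref{item:conv_eq2_ii_c} would be false, and a corresponding example shows \ref{item:conv_eq2_ii_a} genuinely requires its outer closure.
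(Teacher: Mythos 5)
Your proposal is correct, and in several places it takes a genuinely leaner route than the paper. The paper proves \ref{item:conv_eq2_i_a}$\Rightarrow$\ref{item:conv_eq2_i_b} and \ref{item:conv_eq2_ii_a}$\Rightarrow$\ref{item:conv_eq2_ii_b} by factoring through Proposition~\ref{prop:conv_hull_eq2} (a majorization argument for the convex-hull case, a separation argument for the closed case) together with Theorem~\ref{thm:main}; you instead handle Part~\ref{item:conv_eq2_i} by the direct sandwich $\calD+\calK^\circ\subseteq((\conv\calC)\cap\calK)+\calK^\circ\subseteq\conv(\calC\cap\calK)+\calK^\circ$ (using $\calK^\circ+\calK^\circ=\calK^\circ$) against the reference description from Theorem~\ref{thm:feasible}, and you essentially inline the separation proof of Proposition~\ref{prop:conv_hull_eq2}\ref{item:conv_eq3_ii} for Part~\ref{item:conv_eq2_ii}, which is the same mechanism as the paper's (Lemma~\ref{lem:sup_spectral} plus \ref{P1} plus a limiting argument over $\cl(\clconv(\calC\cap\calK)+\calK^\circ)$). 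The larger divergence is in the converse directions: the paper proves \ref{item:conv_eq2_i_c}$\Rightarrow$\ref{item:conv_eq2_i_a} via Lemma~\ref{lem:A_B}, and \ref{item:conv_eq2_ii_c}$\Rightarrow$\ref{item:conv_eq2_ii_a} via a support-function identity followed by strong separation and Lemma~\ref{lem:B}; you replace both by the observation that any $v\in\calK=\ran\lambda$ lifts to some $x$ with $\lambda(x)=v$, and that continuity of $\lambda$ gives $\cl\lambda^{-1}(S)\subseteq\lambda^{-1}(\cl S)$, so the hypothesis \ref{item:conv_eq2_ii_c} immediately places $v=\lambda(x)$ in $\cl(\clconv(\calC\cap\calK)+\calK^\circ)$. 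This is noticeably shorter, especially for the closed case where it avoids the second separation and Lemma~\ref{lem:B} entirely. What the paper's longer route buys is a set of reusable intermediate statements (Proposition~\ref{prop:conv_hull_eq2}, Lemmas~\ref{lem:A_B} and~\ref{lem:B}) that are invoked again when specializing to invariant sets in Section~\ref{sec:invariant}, and Proposition~\ref{prop:conv_hull_eq2}\ref{item:conv_eq3_i} yields the slightly stronger conclusion $\conv\lambda^{-1}(\calC)=\conv\lambda^{-1}(\calD)$ for all $\calD$ up to $\conv\calC$ (not only up to $(\conv\calC)\cap\calK$), which your shortcut does not deliver but which the theorem itself does not need.
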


%\begin{propo}\label{thm:calC_subseteq}
%Let $\calC$ be feasible.  %such that  Then 
%\begin{enumerate}[label=(\roman*),leftmargin=1.5\parindent]
%\item \label{item:conv_eq2_i} If $(\conv\calC)\cap\calK\subseteq \conv(\calC\cap\calK) + \calK^\circ$, then 
%for any convex $\calD$ that satisfies $\conv(\calC\cap\calK)\subseteq \calD \subseteq (\conv \calC)\cap\calK$, we have $\conv\lambda^{-1}(\calC) = \lambda^{-1}(\calD+\calK^\circ).$ 
%\item \label{item:conv_eq2_ii} If $(\clconv\calC)\cap\calK\subseteq \clconv(\calC\cap\calK) + \calK^\circ$, then for any convex $\calD$ that satisfies $\conv(\calC\cap\calK)\subseteq \calD \subseteq (\clconv \calC)\cap\calK$, we have $\clconv\lambda^{-1}(\calC) = \cl\lambda^{-1}(\calD+\calK^\circ).$ In addition, if $\calD+\calK^\circ$ is closed, then $\clconv\lambda^{-1}(\calC) = \lambda^{-1}(\calD+\calK^\circ)$. 
%\end{enumerate}
%\end{theorem} 

\begin{remark}
Note that Theorem~\ref{thm:calC_subseteq}\ref{item:conv_eq2_i}\ref{item:conv_eq2_i_b} is a strengthened version of the characterization of $\conv\lambda^{-1}(\calC)$ %convexification result
 in~\eqref{eq:feasible_conv_hull}, % in Theorem~\ref{thm:feasible}, 
 in the sense that $\calD$ can be as large as $(\conv \calC)\cap\calK$. This, of course, requires stronger assumption on $\calC$, which is shown  in Theorem~\ref{thm:calC_subseteq}\ref{item:conv_eq2_i}\ref{item:conv_eq2_i_a}. In addition, the equivalence between~\ref{item:conv_eq2_i_a} and~\ref{item:conv_eq2_i_b} in Theorem~\ref{thm:calC_subseteq}\ref{item:conv_eq2_i} indicates that this assumption is tight.  %In addition, 
 Note that the same also applies to the strengthened characterization of $\clconv\lambda^{-1}(\calC)$ in Theorem~\ref{thm:calC_subseteq}\ref{item:conv_eq2_ii}\ref{item:conv_eq2_ii_b} and its required assumption in  Theorem~\ref{thm:calC_subseteq}\ref{item:conv_eq2_ii}\ref{item:conv_eq2_ii_a}. 
\end{remark}

\noindent 
The proof of Theorem~\ref{thm:calC_subseteq} leverages the following results. %is a straightforward combination of Theorem~\ref{thm:main} and Proposition~\ref{prop:conv_hull_eq2} below. 

 \begin{prop}\label{prop:conv_hull_eq2}
Let $\calC\subseteq \bbW$ be feasible.  %such that  Then 
\begin{enumerate}[label=(\roman*),leftmargin=1.5\parindent]
\item \label{item:conv_eq3_i} If $(\conv\calC)\cap\calK\subseteq \conv(\calC\cap\calK) + \calK^\circ$, then 
for any $\calD$ that satisfies $\calC\cap\calK\subseteq \calD \subseteq \conv \calC$, we have 
$$\conv\lambda^{-1}(\calC) = \conv\lambda^{-1}(\calD).$$ 
%In particular, we have $\conv\lambda^{-1}(\calC) = \conv\lambda^{-1}(\conv \calC).$
\item \label{item:conv_eq3_ii} If $(\clconv\calC)\cap\calK\subseteq \cl(\clconv(\calC\cap\calK) + \calK^\circ)$, then for any $\calD$ that satisfies $\calC\cap\calK\subseteq \calD \subseteq \clconv \calC$, we have $$\clconv\lambda^{-1}(\calC) = \clconv\lambda^{-1}(\calD).$$
\end{enumerate}
 %In addition, if both $\calC\cap\calK$ and $\calD$ are compact, then $\conv\lambda^{-1}(\calC) = \conv\lambda^{-1}(\calD)$.
\end{prop}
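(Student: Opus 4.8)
The plan is to establish each identity by a two-sided ``sandwich'' inclusion. In both parts the inclusion ``$\subseteq$'' (from $\conv\lambda^{-1}(\calC)$, resp.\ $\clconv\lambda^{-1}(\calC)$, into the $\calD$-version) is immediate from $\calC\cap\calK\subseteq\calD$, since then $\lambda^{-1}(\calC)=\lambda^{-1}(\calC\cap\calK)\subseteq\lambda^{-1}(\calD)$. All the work is in the reverse inclusion, and this is precisely where the displayed hypothesis on $\calC$ enters, in combination with Theorem~\ref{thm:feasible} for part~\ref{item:conv_eq3_i} and a strong-separation argument for part~\ref{item:conv_eq3_ii}.

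For part~\ref{item:conv_eq3_i}: I would write $\lambda^{-1}(\calD)=\lambda^{-1}(\calD\cap\calK)$ and use $\calD\cap\calK\subseteq(\conv\calC)\cap\calK\subseteq\conv(\calC\cap\calK)+\calK^\circ$ (the second inclusion being the hypothesis) to get $\lambda^{-1}(\calD)\subseteq\lambda^{-1}\bigl(\conv(\calC\cap\calK)+\calK^\circ\bigr)$. By~\eqref{eq:feasible_conv_hull} the right-hand set equals $\conv\lambda^{-1}(\calC)$, which is convex, so taking convex hulls yields $\conv\lambda^{-1}(\calD)\subseteq\conv\lambda^{-1}(\calC)$; combining with the trivial inclusion finishes part~\ref{item:conv_eq3_i}.

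For part~\ref{item:conv_eq3_ii}: I would follow the template of the proof of Proposition~\ref{prop:comv_hull_eq}\ref{item:conv_eq_ii}. Take $x\in\lambda^{-1}(\calD)$ and suppose, for contradiction, $x\notin\clconv\lambda^{-1}(\calC)$; strong separation gives $c\in\bbV$ with $\ipt{c}{x}>\sup_{y\in\clconv\lambda^{-1}(\calC)}\ipt{c}{y}=\sup_{y\in\lambda^{-1}(\calC)}\ipt{c}{y}=\sup_{u\in\calC\cap\calK}\ipt{\lambda(c)}{u}$, where the last equality is Lemma~\ref{lem:sup_spectral}. Since $\lambda(x)\in\calD\cap\calK\subseteq(\clconv\calC)\cap\calK\subseteq\cl\bigl(\clconv(\calC\cap\calK)+\calK^\circ\bigr)$ by hypothesis, pick $v_n\in\clconv(\calC\cap\calK)$ and $w_n\in\calK^\circ$ with $v_n+w_n\to\lambda(x)$. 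Because $\lambda(c)\in\ran\lambda=\calK$ we have $\ipt{\lambda(c)}{w_n}\le 0$, hence $\ipt{\lambda(c)}{v_n+w_n}\le\ipt{\lambda(c)}{v_n}\le\sup_{u\in\clconv(\calC\cap\calK)}\ipt{\lambda(c)}{u}=\sup_{u\in\calC\cap\calK}\ipt{\lambda(c)}{u}$; letting $n\to\infty$ and invoking~\ref{P1} gives $\ipt{c}{x}\le\ipt{\lambda(c)}{\lambda(x)}\le\sup_{u\in\calC\cap\calK}\ipt{\lambda(c)}{u}$, contradicting the separation inequality. Thus $\lambda^{-1}(\calD)\subseteq\clconv\lambda^{-1}(\calC)$, and since the latter is closed and convex, $\clconv\lambda^{-1}(\calD)\subseteq\clconv\lambda^{-1}(\calC)$.

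I expect the one genuine subtlety to be in part~\ref{item:conv_eq3_ii}: one cannot interchange $\lambda^{-1}$ with the closure operation, so a direct set-theoretic chase of $\lambda^{-1}\bigl(\cl(\,\cdot\,)\bigr)\subseteq\cl\,\lambda^{-1}(\cdot)$ would fail in general. The separation argument bypasses this, and its crux is the elementary but essential observation that $\ipt{\lambda(c)}{w}\le 0$ for every $w\in\calK^\circ$ (because $\lambda(c)\in\calK$), which is exactly what lets one control the linear functional $\ipt{\lambda(c)}{\cdot}$ along the approximating sequence $v_n+w_n$.
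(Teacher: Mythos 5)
Your proof is correct, and its overall strategy matches the paper's: the easy inclusion from $\calC\cap\calK\subseteq\calD$, and, for the closed convex hull, a strong-separation argument in which the key point is exactly the one you flag, namely $\ipt{\lambda(c)}{w}\le 0$ for $w\in\calK^\circ$ because $\lambda(c)\in\calK$ (your approximating sequence $v_n+w_n$ plays the role of the paper's identity $\sup_{u\in\cl(\clconv(\calC\cap\calK)+\calK^\circ)}\ipt{\lambda(c)}{u}=\sup_{v\in\calC\cap\calK}\ipt{\lambda(c)}{v}$). The only real divergence is in part~(i): the paper argues directly from the hypothesis by picking $v\in\conv(\calC\cap\calK)$ with $\lambda(x)-v\in\calK^\circ$, invoking Lemma~\ref{lem:majorization} to get $x\in\conv[y]$ for $y\in\lambda^{-1}(v)$ and then Lemma~\ref{lem:lambda-1_conv} to place $[y]$ inside $\conv\lambda^{-1}(\calC)$, whereas you simply quote the identity $\conv\lambda^{-1}(\calC)=\lambda^{-1}(\conv(\calC\cap\calK)+\calK^\circ)$ from Theorem~\ref{thm:feasible}, which is legitimate here (that theorem is established earlier and independently of this proposition) and makes part~(i) a one-line inclusion chase; the paper's version re-runs the underlying majorization/orbit argument and so stays closer to the primitive lemmas, while yours buys brevity by reusing the already-proved convexification formula.
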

 
\begin{proof}
Using the same reasoning as in the proof of Proposition~\ref{prop:comv_hull_eq}, if $\calC\cap\calK\subseteq \calD$, then $\conv\lambda^{-1}(\calC) \subseteq \conv\lambda^{-1}(\calD)$ and $\clconv\lambda^{-1}(\calC) \subseteq \clconv\lambda^{-1}(\calD)$. Therefore, to show Part~\ref{item:conv_eq2_i}, it suffices to show $\lambda^{-1}(\conv \calC) \subseteq  \conv\lambda^{-1}(\calC)$, which then implies that $\conv\lambda^{-1}(\calD) \subseteq  \conv\lambda^{-1}(\calC)$ for any $\calD \subseteq \conv \calC$. To that end, take any $x\in \lambda^{-1}(\conv \calC)=\lambda^{-1}((\conv \calC)\cap\calK)$, and by assumption, we have
$$ x\in \lambda^{-1}((\conv \calC)\cap\calK)\subseteq \lambda^{-1}(\conv(\calC\cap\calK) + \calK^\circ).$$ 
Then there exist $v\in \conv(\calC\cap\calK)\subseteq \calK$ such that $\lambda(x) - v\in\calK^\circ$. Let $y\in\lambda^{-1}(v)$, then by Lemma~\ref{lem:majorization}, we know that $x\prec y$, %. By definition, we have %, 
or equivalently, $x\in\conv[y]$. Since $[y] = \lambda^{-1}(v)\subseteq \lambda^{-1}(\conv(\calC\cap\calK))\subseteq \conv\lambda^{-1}(\calC)$ (cf.~Lemma~\ref{lem:lambda-1_conv}), we have $\conv [y]\subseteq \conv\lambda^{-1}(\calC)$, and hence $x\in \conv\lambda^{-1}(\calC)$.

%\begin{equation}
%x\in\conv[y] = \conv\lambda^{-1}(v)\subseteq \conv\lambda^{-1}(\conv(\calC\cap\calK))
%\end{equation}
%%$$. 
%%$\{u_i\}_{i=1}^k\subseteq \calC$ such that $ r$

To show Part~\ref{item:conv_eq2_ii}, it suffices to show %$\lambda^{-1}(\conv \calC) \subseteq  \conv\lambda^{-1}(\calC)$,
%The proof of $\clconv\lambda^{-1}(\calC) = \clconv\lambda^{-1}(\calD)$ parallels that of Proposition~\ref{prop:comv_hull_eq}. In fact, we only need to show that 
$\lambda^{-1}(\clconv \calC)\subseteq \clconv\lambda^{-1}(\calC)$, which then implies that $\clconv\lambda^{-1}(\calD) \subseteq  \clconv\lambda^{-1}(\calC)$ for any $\calD \subseteq \clconv \calC$. Let $x\in \lambda^{-1}(\clconv \calC)$. If $x\not\in \clconv\lambda^{-1}(\calC)$, then by strong separation, there exists $c\in\bbV$ such that~\eqref{eq:cx>} holds.   
On the other hand, since $\lambda(x)\in (\clconv \calC)\cap\calK\subseteq \cl(\clconv(\calC\cap\calK) + \calK^\circ)$, we have 
\begin{align}
\begin{split}
 &\textstyle\ipt{c}{x}\le \ipt{\lambda(c)}{\lambda(x)}\le \sup_{u\in \cl(\clconv(\calC\cap\calK) + \calK^\circ)}\; \ipt{\lambda(c)}{u}\\ %= \sup_{u\in \calC}\; \ipt{\lambda(c)}{u}\\
 &\textstyle\qquad\qquad = \sup_{v\in \clconv(\calC\cap\calK)}\; \ipt{\lambda(c)}{v}+\sup_{w\in \calK^\circ}\;\ipt{\lambda(c)}{w}= \sup_{v\in \calC\cap\calK}\; \ipt{\lambda(c)}{v},
 \end{split} \label{eq:ub_ipt_cx}
 %\label{eq:cx<=}
\end{align}
where the % third  and fourth inequalities
last inequality in~\eqref{eq:ub_ipt_cx} follows from  %$(\clconv\calC)\cap\calK\subseteq \clconv(\calC\cap\calK) + \calK^\circ$ and 
$\lambda(c)\in\calK$. %, respectively. 
Combining~\eqref{eq:cx>} and~\eqref{eq:ub_ipt_cx}, we reach a contradiction. 
%Now, if  both  $\calC\cap\calK$ and $\calD$ are compact, then both $\lambda^{-1}(\calC\cap\calK)$ and $\lambda^{-1}(\calD)$ are compact (cf.~Remark~\ref{rmk:compact_preimage}), and so are $\conv\lambda^{-1}(\calC\cap\calK)$ and $\conv\lambda^{-1}(\calD)$. Then we have $\conv\lambda^{-1}(\calC) = \clconv\lambda^{-1}(\calC) = \clconv\lambda^{-1}(\calD) = \conv\lambda^{-1}(\calD)$.
\end{proof}

\begin{lemma} \label{lem:A_B}
For any nonempty sets $\calA, \calB\subseteq\calK$, if $\lambda^{-1}(\calA+\calK^\circ) = \lambda^{-1}(\calB+\calK^\circ)$, then $\calB \subseteq\calA + \calK^\circ$ and $\calA \subseteq\calB + \calK^\circ$. 
\end{lemma}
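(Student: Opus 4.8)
The plan is to establish the two inclusions by a single symmetric argument: it suffices to prove $\calB \subseteq \calA + \calK^\circ$, and the inclusion $\calA \subseteq \calB + \calK^\circ$ then follows verbatim after interchanging the roles of $\calA$ and $\calB$.

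First I would fix an arbitrary $b \in \calB$ (here the nonemptiness of $\calB$ is used). Since $\calB \subseteq \calK$ and $\calK = \ran\lambda = \lambda(\bbV)$, there exists $x \in \bbV$ with $\lambda(x) = b$. The elementary but crucial observation is that $\vecz \in \calK^\circ$, so $b = b + \vecz \in \calB + \calK^\circ$; equivalently, $\lambda(x) \in \calB + \calK^\circ$, i.e., $x \in \lambda^{-1}(\calB + \calK^\circ)$.

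Next I would invoke the hypothesis $\lambda^{-1}(\calA + \calK^\circ) = \lambda^{-1}(\calB + \calK^\circ)$ to conclude $x \in \lambda^{-1}(\calA + \calK^\circ)$, which by the definition of the preimage means precisely that $b = \lambda(x) \in \calA + \calK^\circ$. Since $b \in \calB$ was arbitrary, this gives $\calB \subseteq \calA + \calK^\circ$, and by symmetry $\calA \subseteq \calB + \calK^\circ$, completing the proof.

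I do not anticipate any genuine obstacle here: the argument only uses that $\calK$ coincides with the range of $\lambda$ (so every element of $\calK$ admits a $\lambda$-preimage), that the polar cone $\calK^\circ$ contains the origin, and the definition of preimage. The one point worth stating explicitly is that the nonemptiness hypotheses on $\calA$ and $\calB$ are exactly what allow us to extract the test point from each set.
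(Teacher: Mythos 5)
Your proof is correct and is essentially the paper's argument: both rest on $\vecz\in\calK^\circ$, $\calB\subseteq\calK=\ran\lambda$, and the hypothesis on the preimages, the only cosmetic difference being that you argue directly with a lifted point $x\in\lambda^{-1}(b)$ while the paper phrases the same facts as the set identity $\lambda(\lambda^{-1}(\calA+\calK^\circ))=(\calA+\calK^\circ)\cap\calK$ and concludes by contradiction. No gap to report.
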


\begin{proof}
%It suffices to 
%Note that 
By the symmetry between $\calA$ and $\calB$, we only need to show $\calB \subseteq\calA + \calK^\circ$. 
Since $\lambda^{-1}(\calA+\calK^\circ) = \lambda^{-1}(\calB+\calK^\circ)$, we have 
\begin{equation}
(\calA+\calK^\circ)\cap\calK = \lambda(\lambda^{-1}(\calA+\calK^\circ)) = \lambda(\lambda^{-1}(\calB+\calK^\circ)) = (\calB+\calK^\circ)\cap\calK. 
\end{equation}
Suppose that $\calB \not\subseteq\calA + \calK^\circ$, then there exists $u\in\calB$ such that $u\not\in\calA + \calK^\circ$, and hence $u\not\in(\calA + \calK^\circ)\cap\calK$. On the other hand, since $\calB\subseteq \calK$ and $0\in\calK^\circ$, 
we have  $u\in\calB \subseteq  (\calB+\calK^\circ)\cap\calK=(\calA+\calK^\circ)\cap\calK$, leading to a contradiction. %This shows that $\calB \subseteq\calA + \calK^\circ$. 
\end{proof}

\begin{lemma} \label{lem:B}
For any nonempty set $\calB\subseteq\calK$, we have $((\calB + \calK^\circ)\cap \calK) + \calK^\circ = \calB + \calK^\circ$. 
\end{lemma}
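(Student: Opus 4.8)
The plan is to establish the two set inclusions separately, relying only on the elementary facts that the polar cone $\calK^\circ$ is a convex cone — so $\vecz\in\calK^\circ$ and $\calK^\circ+\calK^\circ=\calK^\circ$ — together with the standing hypothesis $\calB\subseteq\calK$. No majorization or FTvN-specific input is needed; this is pure set arithmetic.

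For the inclusion ``$\supseteq$'', i.e.\ $\calB+\calK^\circ\subseteq((\calB+\calK^\circ)\cap\calK)+\calK^\circ$, I would first note that since $\vecz\in\calK^\circ$ we have $\calB=\calB+\{\vecz\}\subseteq\calB+\calK^\circ$, and since $\calB\subseteq\calK$ by hypothesis, this shows $\calB\subseteq(\calB+\calK^\circ)\cap\calK$. Adding $\calK^\circ$ to both sides of this last inclusion immediately gives $\calB+\calK^\circ\subseteq((\calB+\calK^\circ)\cap\calK)+\calK^\circ$, as desired.

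For the inclusion ``$\subseteq$'', I would start from the trivial inclusion $(\calB+\calK^\circ)\cap\calK\subseteq\calB+\calK^\circ$, add $\calK^\circ$ to both sides to obtain $((\calB+\calK^\circ)\cap\calK)+\calK^\circ\subseteq(\calB+\calK^\circ)+\calK^\circ$, and then use associativity of the Minkowski sum together with $\calK^\circ+\calK^\circ=\calK^\circ$ (valid because $\calK^\circ$ is a convex cone, hence closed under addition) to rewrite the right-hand side as $\calB+(\calK^\circ+\calK^\circ)=\calB+\calK^\circ$. Combining the two inclusions yields the claimed equality. I do not expect any genuine obstacle here: the only point worth stating explicitly is that $\calK^\circ$ is a (closed) convex cone containing the origin, which is a standard property of polar cones of nonempty cones.
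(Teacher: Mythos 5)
Your proof is correct and follows essentially the same route as the paper: both directions rest on $\vecz\in\calK^\circ$, $\calB\subseteq\calK$, and the fact that $\calK^\circ$ is a convex cone (so $\calK^\circ+\calK^\circ=\calK^\circ$), which the paper states element-wise rather than via Minkowski-sum algebra. No gaps.
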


\begin{proof}
Since $0\in \calK^\circ$ and $\calB\subseteq\calK$, we have $\calB %= (\calB +\{0\})\cap \calK
\subseteq (\calB +\calK^\circ)\cap \calK$, and hence $\calB + \calK^\circ \subseteq ((\calB +\calK^\circ)\cap \calK)+ \calK^\circ$. Conversely, since $(\calB + \calK^\circ)\cap \calK \subseteq\calB + \calK^\circ$, for any $u\in (\calB + \calK^\circ)\cap \calK$, there exist $v\in\calB$ and $w\in \calK^\circ$ such that $u = v+w$. Now, take any $w'\in \calK^\circ$, we have $u+w' = (v+w) + w'=v+(w + w')\in \calB + \calK^\circ$ (since $\calK^\circ$ is a convex cone), and hence $((\calB + \calK^\circ)\cap \calK) + \calK^\circ\subseteq \calB + \calK^\circ$. 
\end{proof}

\begin{proof}[Proof of Theorem~\ref{thm:calC_subseteq}]
To show Part~\ref{item:conv_eq2_i}, first note that~\ref{item:conv_eq2_i_a}~$\Rightarrow$~\ref{item:conv_eq2_i_b} 
%The ``only if'' direction of Part~\ref{item:conv_eq2_i} 
 straightforwardly follows from Theorem~\ref{thm:main} and Proposition~\ref{prop:conv_hull_eq2}\ref{item:conv_eq3_i}, and that~\ref{item:conv_eq2_i_b}~$\Rightarrow$~\ref{item:conv_eq2_i_c} is trivial.   To show~\ref{item:conv_eq2_i_c}~$\Rightarrow$~\ref{item:conv_eq2_i_a}, note that from Theorem~\ref{thm:feasible}, we have $ \lambda^{-1}(\conv (\calC\cap\calK)+\calK^\circ) = \conv\lambda^{-1}(\calC) = \lambda^{-1}(((\conv \calC)\cap\calK)+\calK^\circ)$. % for $\calD:= $. %Since $\calD\subseteq \calK$, 
 Then by Lemma~\ref{lem:A_B}, we have $(\conv \calC)\cap\calK\subseteq \conv (\calC\cap\calK)+\calK^\circ$. 
 
 To show Part~\ref{item:conv_eq2_ii}, first note that~\ref{item:conv_eq2_ii_a}~$\Rightarrow$~\ref{item:conv_eq2_ii_b} 
%The ``only if'' direction of Part~\ref{item:conv_eq2_i} 
 straightforwardly follows from Theorem~\ref{thm:main} and Proposition~\ref{prop:conv_hull_eq2}\ref{item:conv_eq3_ii}, and that~\ref{item:conv_eq2_ii_b}~$\Rightarrow$~\ref{item:conv_eq2_ii_c} is trivial.   To show~\ref{item:conv_eq2_ii_c}~$\Rightarrow$~\ref{item:conv_eq2_ii_a}, note that from Theorem~\ref{thm:feasible}, we have $\cl \lambda^{-1}(\clconv (\calC\cap\calK)+\calK^\circ) = \clconv\lambda^{-1}(\calC) = \cl\lambda^{-1}(((\clconv \calC)\cap\calK) +\calK^\circ)$, and hence for any $c\in \bbV$, we have 
 \begin{align}
 \begin{split}
  &\textstyle\sup_{u\in (\clconv (\calC\cap\calK)+\calK^\circ)\cap\calK}\; \ipt{\lambda(c)}{u}=\sup_{x\in \lambda^{-1}(\clconv (\calC\cap\calK)+\calK^\circ)}\;\ipt{c}{x}\\
&\textstyle\qquad\qquad  = \sup_{x\in \lambda^{-1}(((\clconv \calC)\cap\calK) +\calK^\circ)}\;\ipt{c}{x} = \sup_{u\in (((\clconv \calC)\cap\calK) +\calK^\circ)\cap\calK}\;\ipt{\lambda(c)}{u}.
\end{split} \label{eq:ipt_lambda_c_eq}
 \end{align}
Now, suppose that~\ref{item:conv_eq2_ii_a} fails to hold, then there exists  $u\in (\clconv\calC)\cap\calK$ but $u\not\in \cl(\clconv(\calC\cap\calK) + \calK^\circ)$. Then by strong separation, there exists $a\in \bbW$ such that 
\begin{equation}
\ipt{a}{u}>\textstyle\sup_{v\in \cl(\clconv(\calC\cap\calK) + \calK^\circ)}\; \ipt{a}{v} = \sup_{v_1\in \clconv(\calC\cap\calK) }\; \ipt{a}{v_1} + \sup_{v_2\in  \calK^\circ}\; \ipt{a}{v_2},  \label{eq:ipt_a_>}
\end{equation}
and hence $\sup_{v_2\in  \calK^\circ}\; \ipt{a}{v_2}<+\infty$. This implies that $a\in (\calK^\circ)^\circ = \calK$ (since $\calK$ is closed and convex). %
Since $u\in (\clconv\calC)\cap\calK$ and $a\in\calK$, by Lemma~\ref{lem:B}, we have  % On the other hand,
 \begin{equation}
 \begin{split}
\ipt{a}{u}&\le \textstyle\sup_{v\in (\clconv\calC)\cap\calK}\; \ipt{a}{v} = \sup_{v\in ((\clconv\calC)\cap\calK) + \calK^\circ}\; \ipt{a}{v}\\
 &\textstyle = \sup_{v\in ((((\clconv\calC)\cap\calK) + \calK^\circ)\cap \calK) + \calK^\circ }\; \ipt{a}{v}= \sup_{v\in (((\clconv\calC)\cap\calK) + \calK^\circ)\cap \calK}\; \ipt{a}{v} . 
\end{split} \label{eq:ipt_a_le}
\end{equation}
 On the other hand, from the first inequality in~\eqref{eq:ipt_a_>}, we have % note that 
 \begin{equation}
\ipt{a}{u}>\textstyle\sup_{v\in \clconv(\calC\cap\calK) + \calK^\circ}\; \ipt{a}{v}\ge \sup_{v\in (\clconv(\calC\cap\calK) + \calK^\circ)\cap\calK}\; \ipt{a}{v}. \label{eq:ipt_a_>2}
% = \sup_{v_1\in \clconv(\calC\cap\calK) }\; \ipt{a}{v_1} + \sup_{v_2\in  \calK^\circ}\; \ipt{a}{v_2}, 
\end{equation}
By combining~\eqref{eq:ipt_a_le} and~\eqref{eq:ipt_a_>2} and letting $c\in \lambda^{-1}(a)$ in~\eqref{eq:ipt_lambda_c_eq}, we reach a contradiction. 
 %note that from Theorem~\ref{thm:feasible}, we have $ \lambda^{-1}(\conv (\calC\cap\calK)+\calK^\circ) = \conv\lambda^{-1}(\calC) = \lambda^{-1}(\calD+\calK^\circ)$ for $\calD:= (\conv \calC)\cap\calK$. Since $\calD\subseteq \calK$, by Lemma~\ref{lem:A_B}, we have $(\conv \calC)\cap\calK\subseteq \conv (\calC\cap\calK)+\calK^\circ$. 
%For Part~\ref{item:conv_eq2_ii}, note that the forward direction follows from Theorem~\ref{thm:main} and Proposition~\ref{prop:conv_hull_eq2}\ref{item:conv_eq3_ii}. For the converse, note that if $\calC$ is bounded, then both  $((\clconv \calC)\cap\calK) + \calK^\circ$ and $\clconv (\calC\cap\calK) + \calK^\circ$ are closed, % for $\calD:=(\clconv \calC)\cap\calK$ or  $\calD:=\clconv (\calC\cap\calK)$, 
%and hence  by Theorem~\ref{thm:feasible}, we have  
%\begin{equation}
%\lambda^{-1}(\clconv (\calC\cap\calK)+\calK^\circ) = \clconv\lambda^{-1}(\calC) = \lambda^{-1}(((\clconv \calC)\cap\calK)+\calK^\circ).
%\end{equation}
%Then by Lemma~\ref{lem:A_B}, we have $(\clconv \calC)\cap\calK\subseteq \clconv (\calC\cap\calK)+\calK^\circ$. 
%%To show the 
 %If $\lambda^{-1}(\conv (\calC\cap\calK)+\calK^\circ)$
\end{proof}

%\begin{remark}
%Note that the assumption $(\conv\calC)\cap\calK\subseteq \conv(\calC\cap\calK) + \calK^\circ$ in Theorem~\ref{thm:calC_subseteq}\ref{item:conv_eq2_i} is tight in the following sense: for feasible $\calC$, if $\conv\lambda^{-1}(\calC) = \lambda^{-1}(((\conv \calC)\cap\calK)+\calK^\circ),$ then we must have $(\conv\calC)\cap\calK\subseteq \conv(\calC\cap\calK) + \calK^\circ$. To see this, note that for any sets $\calA\subseteq \calB\subseteq\calK$, if $\lambda^{-1}(\calA+\calK^\circ) = \lambda^{-1}(\calB+\calK^\circ)$, then $\calB \subseteq\calA + \calK^\circ$. 
%
%from Theorem~\ref{thm:feasible}, we have $\conv\lambda^{-1}(\calC) = \lambda^{-1}(\conv (\calC\cap\calK)+\calK^\circ)$. If $\lambda^{-1}(\conv (\calC\cap\calK)+\calK^\circ)$
%\end{remark}

%Let $\calC$ be feasible such that $\calC\subseteq \clconv(\calC\cap\calK) + \calK^\circ$.  Then 
% for any convex set $\calD$ such that $\conv (\calC\cap\calK) \subseteq \calD\subseteq (\clconv \calC)\cap\calK$, we have
%\begin{equation}
%\clconv\lambda^{-1}(\calC) %=\clconv\lambda^{-1}(\conv (\calC\cap\calK)) 
%= \cl\lambda^{-1}(\calD+\calK^\circ). 
%\end{equation}

%\end{corollary}

 \subsection{Specializing the Results in Section~\ref{sec:second} to Invariant Sets} \label{sec:invariant}
 
Let $\calC\subseteq \bbW$ be invariant. From Definition~\ref{def:invariant}, there exists a reduced system $(\bbW, \bbW , \mu )$ of $(\bbV,\bbW,\lambda)$ such that $\calC$ is  {\em spectral} in $(\bbW, \bbW , \mu)$. In this section, we shall show that any invariant %spectral 
set $\calC$ actually %in $(\bbW, \bbW , \mu)$ 
satisfies the geometric assumptions in Theorem~\ref{thm:calC_subseteq} and Proposition~\ref{prop:conv_hull_eq2}, %Section~\ref{sec:second}, 
and as a result, %hence we can 
we can utilize these results %in Section~\ref{sec:second} 
to obtain convexifications of $\lambda^{-1}(\calC)$ for any invariant set $\calC$. % corresponding to any invariant set $\calC$. 

\begin{prop} \label{prop:C_spec_subseteq}
For all $u\in\bbW$, we have $[u]\subseteq \mu(u)+  \calK^\circ$. Consequently, for any nonempty spectral set $%\emptyset\ne 
\calC\subseteq \bbW$  in $(\bbW, \bbW , \mu)$,  we have $\calC \subseteq \calC\cap\calK + \calK^\circ$. 
\end{prop}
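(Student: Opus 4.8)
The plan is to reduce everything to the single elementary fact that $v-\mu(v)\in\calK^\circ$ for every $v\in\bbW$. To prove this, I would fix $v\in\bbW$ and take an arbitrary $x\in\calK$. Since $\calK=\ran\mu$ and $(\bbW,\bbW,\mu)$ is a reduced system, Definition~\ref{def:reduced} gives $\mu(x)=x$; applying property~\ref{P1} of the FTvN system $(\bbW,\bbW,\mu)$ to the pair $(v,x)$ then yields $\ipt{v}{x}\le\ipt{\mu(v)}{\mu(x)}=\ipt{\mu(v)}{x}$, i.e.\ $\ipt{v-\mu(v)}{x}\le 0$. As $x\in\calK$ was arbitrary, this says precisely that $v-\mu(v)\in\calK^\circ$ by the definition of the polar cone.

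Granting this, the first assertion is immediate: if $w$ lies in the $\mu$-orbit $[u]=\mu^{-1}(\mu(u))$, then $\mu(w)=\mu(u)$, so $w=\mu(u)+(w-\mu(w))$ with $w-\mu(w)\in\calK^\circ$, whence $w\in\mu(u)+\calK^\circ$; thus $[u]\subseteq\mu(u)+\calK^\circ$. For the consequence, let $\calC\subseteq\bbW$ be nonempty and spectral in $(\bbW,\bbW,\mu)$ and fix $u\in\calC$. By Remark~\ref{rmk:mu(u)_in_[u]}, $\mu(u)\in[u]$, and since $\calC$ is spectral in $(\bbW,\bbW,\mu)$, Lemma~\ref{lem:spectral_set}\ref{item:spectral_ii} gives $[u]\subseteq\calC$; as $\mu(u)\in\calK$ as well, this shows $\mu(u)\in\calC\cap\calK$. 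Combining this with $u\in[u]\subseteq\mu(u)+\calK^\circ$ from the first assertion, we obtain $u\in(\calC\cap\calK)+\calK^\circ$, and since $u\in\calC$ was arbitrary, $\calC\subseteq(\calC\cap\calK)+\calK^\circ$.

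I do not anticipate any genuine obstacle: the whole argument rests on the one-line combination of property~\ref{P1} with the identity $\mu|_{\calK}=\mathrm{id}$, and every subsequent step is a direct consequence of the definitions of $\mu$-orbit and spectral set together with Remark~\ref{rmk:mu(u)_in_[u]} and Lemma~\ref{lem:spectral_set}.
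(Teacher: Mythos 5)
Your proof is correct and follows essentially the same route as the paper: the first inclusion rests on the same one-line combination of property \ref{P1} for $(\bbW,\bbW,\mu)$ with $\mu|_{\calK}=\mathrm{id}$, and the second part uses the same ingredients (Remark~\ref{rmk:mu(u)_in_[u]} and Lemma~\ref{lem:spectral_set}\ref{item:spectral_ii}). The only cosmetic difference is that you argue pointwise and inline the inclusion $\mu(\calC)\subseteq\calC\cap\calK$, whereas the paper invokes Lemma~\ref{lem:mu_calC} and the union-of-orbits description of $\calC$.
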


\noindent 
Before proving Proposition~\ref{prop:C_spec_subseteq}, note that if $\calC$ satisfies that $\calC \subseteq \calC\cap\calK + \calK^\circ$, then it certainly satisfies that $(\conv\calC)\cap\calK\subseteq \conv(\calC\cap\calK) + \calK^\circ$ and $(\clconv\calC)\cap\calK\subseteq \cl(\clconv(\calC\cap\calK) + \calK^\circ)$. %the geometric assumptions in 
In addition, note that any nonempty spectral set $\calC$  in $(\bbW, \bbW , \mu)$ is feasible (cf.~Remark~\ref{rmk:nonempty_spectral}). 
%from ,  if $\calC$ is nonempty and spectral in   $(\bbW, \bbW , \mu)$, then it is . 
Therefore, we can specialize Theorem~\ref{thm:calC_subseteq} and Proposition~\ref{prop:conv_hull_eq2} to this case, and obtain 
%namely 
%%we certainly have 
%%. As such, we immediately have 
the following two corollaries.

\begin{corollary}\label{cor:conv_K^o}
Let $\calC\subseteq \bbW$ be a nonempty spectral set in $(\bbW, \bbW , \mu)$. 
\begin{enumerate}[label=(\roman*),leftmargin=1.5\parindent]
\item For any convex $\calD$ that satisfies $\conv(\calC\cap\calK)\subseteq \calD \subseteq (\conv \calC)\cap\calK$, we have $$\conv\lambda^{-1}(\calC) = \lambda^{-1}(\calD+\calK^\circ).$$
\item For any convex $\calD$ that satisfies $\conv(\calC\cap\calK)\subseteq \calD \subseteq (\clconv \calC)\cap\calK$, we have $$\clconv\lambda^{-1}(\calC) = \cl\lambda^{-1}(\calD+\calK^\circ).$$ %In addition, if $\calD+\calK^\circ$ is closed, then $\clconv\lambda^{-1}(\calC) = \lambda^{-1}(\calD+\calK^\circ)$. 
\end{enumerate}
\end{corollary}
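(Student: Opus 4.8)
The plan is to obtain both identities as an immediate specialization of Theorem~\ref{thm:calC_subseteq}: once we know that a nonempty spectral set $\calC$ in the reduced system $(\bbW,\bbW,\mu)$ satisfies the geometric hypotheses~\ref{item:conv_eq2_i_a} and~\ref{item:conv_eq2_ii_a} of that theorem, Part~(i) of the corollary is exactly the implication~\ref{item:conv_eq2_i_a}~$\Rightarrow$~\ref{item:conv_eq2_i_b} and Part~(ii) is~\ref{item:conv_eq2_ii_a}~$\Rightarrow$~\ref{item:conv_eq2_ii_b}. To invoke Theorem~\ref{thm:calC_subseteq} at all, I first note that $\calC$ is feasible, which holds by Remark~\ref{rmk:nonempty_spectral} since $\calC$ is nonempty and spectral in $(\bbW,\bbW,\mu)$. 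All the real content is therefore in verifying the two containments.

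For this I would invoke Proposition~\ref{prop:C_spec_subseteq}, which gives the key inclusion $\calC \subseteq (\calC\cap\calK)+\calK^\circ$. Enlarging the right-hand side, $\calC \subseteq \conv(\calC\cap\calK)+\calK^\circ$, and since $\conv(\calC\cap\calK)+\calK^\circ$ is convex (a Minkowski sum of two convex sets), monotonicity of $\conv$ gives $\conv\calC \subseteq \conv(\calC\cap\calK)+\calK^\circ$; intersecting with $\calK$ then yields $(\conv\calC)\cap\calK \subseteq \conv(\calC\cap\calK)+\calK^\circ$, which is precisely~\ref{item:conv_eq2_i_a}. The closed version is parallel: $\calC \subseteq \clconv(\calC\cap\calK)+\calK^\circ \subseteq \cl(\clconv(\calC\cap\calK)+\calK^\circ)$, and as $\cl(\clconv(\calC\cap\calK)+\calK^\circ)$ is closed and convex it must contain $\clconv\calC$, so intersecting with $\calK$ gives $(\clconv\calC)\cap\calK \subseteq \cl(\clconv(\calC\cap\calK)+\calK^\circ)$, which is~\ref{item:conv_eq2_ii_a}.

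With both hypotheses established, the two stated equalities follow directly from Theorem~\ref{thm:calC_subseteq}. There is no substantive obstacle here; the corollary is genuinely a specialization, and the only point meriting care is the chain of enlargement steps --- remembering that $\conv$ and $\clconv$ are monotone and that a convex (resp.\ closed convex) superset of $\calC$ automatically contains $\conv\calC$ (resp.\ $\clconv\calC$). These are exactly the elementary facts already flagged in the paragraph preceding the corollary, so the proof amounts to citing Proposition~\ref{prop:C_spec_subseteq} to supply the containments and then quoting the relevant implications of Theorem~\ref{thm:calC_subseteq}.
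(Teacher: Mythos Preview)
Your proposal is correct and follows essentially the same route as the paper: use Proposition~\ref{prop:C_spec_subseteq} to obtain $\calC\subseteq(\calC\cap\calK)+\calK^\circ$, deduce from this the hypotheses~\ref{item:conv_eq2_i_a} and~\ref{item:conv_eq2_ii_a} of Theorem~\ref{thm:calC_subseteq} (exactly as the paper notes in the paragraph preceding the corollary), and then invoke the implications~\ref{item:conv_eq2_i_a}~$\Rightarrow$~\ref{item:conv_eq2_i_b} and~\ref{item:conv_eq2_ii_a}~$\Rightarrow$~\ref{item:conv_eq2_ii_b}. Your write-up even makes explicit the monotonicity/convexity steps that the paper leaves implicit.
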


 \begin{corollary}\label{cor:conv_hull_eq_inv}
Let $\calC\subseteq \bbW$ be a nonempty spectral set in $(\bbW, \bbW , \mu)$. 
\begin{enumerate}[label=(\roman*),leftmargin=1.5\parindent]
\item \label{item:conv_eq_inv_i} For any $\calD$ that satisfies $\calC\cap\calK\subseteq \calD \subseteq \conv \calC$, we have $\conv\lambda^{-1}(\calC) = \conv\lambda^{-1}(\calD).$ 
%In particular, we have $\conv\lambda^{-1}(\calC) = \conv\lambda^{-1}(\conv \calC).$
\item \label{item:conv_eq_inv_ii} For any $\calD$ that satisfies $\calC\cap\calK\subseteq \calD \subseteq \clconv \calC$, we have $\clconv\lambda^{-1}(\calC) = \clconv\lambda^{-1}(\calD).$
\end{enumerate}
 %In addition, if both $\calC\cap\calK$ and $\calD$ are compact, then $\conv\lambda^{-1}(\calC) = \conv\lambda^{-1}(\calD)$.
\end{corollary}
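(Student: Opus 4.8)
The plan is to recognize that Corollary~\ref{cor:conv_hull_eq_inv} is simply the specialization of Proposition~\ref{prop:conv_hull_eq2} to invariant sets; the only thing that needs to be checked is that a nonempty spectral set $\calC$ in a reduced system $(\bbW,\bbW,\mu)$ automatically satisfies the geometric hypotheses required in that proposition. The whole bridge is Proposition~\ref{prop:C_spec_subseteq}, which I would take as given (it is stated above): such a $\calC$ obeys $\calC\subseteq(\calC\cap\calK)+\calK^\circ$. I would also record at the outset that $\calC$ is feasible, by Remark~\ref{rmk:nonempty_spectral}, so that Proposition~\ref{prop:conv_hull_eq2} applies at all.

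First I would upgrade the containment $\calC\subseteq(\calC\cap\calK)+\calK^\circ$ to the two hypotheses demanded. Applying $\conv$ to both sides and using that $\calK^\circ$ is a convex cone, so that $\conv(A+\calK^\circ)=\conv A+\calK^\circ$, gives $\conv\calC\subseteq\conv(\calC\cap\calK)+\calK^\circ$; intersecting with $\calK$ yields $(\conv\calC)\cap\calK\subseteq\conv(\calC\cap\calK)+\calK^\circ$, which is the hypothesis of Proposition~\ref{prop:conv_hull_eq2}\ref{item:conv_eq3_i}. Similarly, applying $\clconv$ and using $\clconv(A+\calK^\circ)=\cl(\conv A+\calK^\circ)\subseteq\cl(\clconv A+\calK^\circ)$ gives $\clconv\calC\subseteq\cl(\clconv(\calC\cap\calK)+\calK^\circ)$, and intersecting with $\calK$ produces the hypothesis of Proposition~\ref{prop:conv_hull_eq2}\ref{item:conv_eq3_ii}. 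This elementary step is precisely the observation already flagged in the remark immediately preceding Proposition~\ref{prop:C_spec_subseteq}. It then remains to invoke Proposition~\ref{prop:conv_hull_eq2}\ref{item:conv_eq3_i} with any $\calD$ satisfying $\calC\cap\calK\subseteq\calD\subseteq\conv\calC$, which gives $\conv\lambda^{-1}(\calC)=\conv\lambda^{-1}(\calD)$, and Proposition~\ref{prop:conv_hull_eq2}\ref{item:conv_eq3_ii} with any $\calD$ satisfying $\calC\cap\calK\subseteq\calD\subseteq\clconv\calC$, which gives $\clconv\lambda^{-1}(\calC)=\clconv\lambda^{-1}(\calD)$; these are exactly the two parts of the corollary.

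Since all of the analytic content is front-loaded into Proposition~\ref{prop:conv_hull_eq2} and Proposition~\ref{prop:C_spec_subseteq}, I do not expect any real obstacle in the corollary itself --- it is essentially bookkeeping. The one place that calls for care is the closed-convex-hull computation: the outer closure in $\cl(\clconv(\calC\cap\calK)+\calK^\circ)$ must be kept throughout, since $\clconv(\calC\cap\calK)+\calK^\circ$ need not be closed (cf.\ Remark~\ref{rmk:cl_nec}), so one should not quietly drop it.

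For completeness, if I also had to supply the missing ingredient Proposition~\ref{prop:C_spec_subseteq}, the crux would be the claim $[u]\subseteq\mu(u)+\calK^\circ$ for all $u\in\bbW$, where $[u]=\mu^{-1}(\mu(u))$ denotes the $\mu$-orbit. I would fix $v\in[u]$, so that $\mu(v)=\mu(u)$, take an arbitrary $w\in\calK$, write $w=\mu(w)$ using that $\mu$ fixes $\calK=\ran\mu$, and apply property~\ref{P1} of the FTvN system $(\bbW,\bbW,\mu)$ to the pair $(v,w)$ to get $\ipt{v}{w}\le\ipt{\mu(v)}{\mu(w)}=\ipt{\mu(u)}{w}$, i.e.\ $\ipt{v-\mu(u)}{w}\le 0$; ranging over $w\in\calK$ then gives $v-\mu(u)\in\calK^\circ$. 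The set-level claim follows by writing the spectral $\calC$ as the union of its $\mu$-orbits (Lemma~\ref{lem:spectral_set}) and noting that for each $u\in\calC$ the point $\mu(u)$ lies in $\calC\cap\calK$ (by Remark~\ref{rmk:mu(u)_in_[u]} and Lemma~\ref{lem:spectral_set}\ref{item:spectral_ii}), so $[u]\subseteq\mu(u)+\calK^\circ\subseteq(\calC\cap\calK)+\calK^\circ$.
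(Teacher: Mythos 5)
Your proposal is correct and follows essentially the same route as the paper: the corollary is obtained by noting feasibility (Remark~\ref{rmk:nonempty_spectral}), upgrading $\calC\subseteq(\calC\cap\calK)+\calK^\circ$ from Proposition~\ref{prop:C_spec_subseteq} to the two hypotheses of Proposition~\ref{prop:conv_hull_eq2}, and then specializing that proposition. Your sketch of Proposition~\ref{prop:C_spec_subseteq} also matches the paper's argument (property~\ref{P1} in the reduced system plus writing $\calC$ as a union of $\mu$-orbits), so there is nothing to correct.
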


%\begin{proof}
%Immediate from Theorem~\ref{thm:calC_subseteq}. 
%\end{proof}

\noindent 
In addition, we can easily obtain the following ``transfer principle'' regarding $\conv\lambda^{-1}(\calC)$. % in the  %As Note that although this result has been proved for 

\begin{corollary}\label{cor:transfer}
Let $\calC\subseteq \bbW$ be a nonempty spectral set in $(\bbW, \bbW , \mu)$. 
Then $$\conv\lambda^{-1}(\calC) =\lambda^{-1}(\conv\calC) .$$ 
\end{corollary}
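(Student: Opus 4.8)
The plan is to reduce the statement to Theorem~\ref{thm:feasible} by two observations: that $\conv\calC$ is itself a $\mu$-preimage, and that pre-composing with the idempotent map $\mu$ leaves $\lambda$-preimages unchanged. Throughout, I use that the reduced system $(\bbW,\bbW,\mu)$ is itself a FTvN system whose range (hence ``cone'') is $\ran\mu=\calK$ and whose polar cone is therefore the same $\calK^\circ$, so the notation is consistent with the ambient system.

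First I would record two elementary facts about the invariant set $\calC$. Since $\calC$ is nonempty and spectral in $(\bbW,\bbW,\mu)$, it is feasible by Remark~\ref{rmk:nonempty_spectral}, so $\calC\cap\calK\neq\emptyset$; and by Lemma~\ref{lem:spectral_set} applied in $(\bbW,\bbW,\mu)$ one has $\calC=\mu^{-1}(\calC\cap\calK)$. Indeed, for $u\in\calC$, Remark~\ref{rmk:mu(u)_in_[u]} gives $\mu(u)\in\mu^{-1}(\mu(u))=[u]$ and Lemma~\ref{lem:spectral_set}\ref{item:spectral_ii} gives $[u]\subseteq\calC$, so $\mu(u)\in\calC\cap\calK$ and $u\in\mu^{-1}(\calC\cap\calK)$; conversely, if $\mu(u)\in\calC\cap\calK\subseteq\calC$ then the $\mu$-orbit $[\mu(u)]$ lies in $\calC$, and $u\in[\mu(u)]$ since $\mu(\mu(u))=\mu(u)$. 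Next I would apply Theorem~\ref{thm:main} \emph{inside} the FTvN system $(\bbW,\bbW,\mu)$ with the choice $\calD:=\conv(\calC\cap\calK)$, which is nonempty, convex, and contained in $\calK$ (by convexity of $\calK$). This yields
\[
\conv\mu^{-1}(\calC\cap\calK)=\mu^{-1}\bigl(\conv(\calC\cap\calK)+\calK^\circ\bigr),
\]
and combining with $\calC=\mu^{-1}(\calC\cap\calK)$ gives $\conv\calC=\mu^{-1}\bigl(\conv(\calC\cap\calK)+\calK^\circ\bigr)$.

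The key step is then the identity $\mu\circ\lambda=\lambda$: for every $x\in\bbV$ we have $\lambda(x)\in\ran\lambda=\calK$, hence $\mu(\lambda(x))=\lambda(x)$ by Definition~\ref{def:reduced}. Consequently, for any $S\subseteq\bbW$,
\[
\lambda^{-1}\bigl(\mu^{-1}(S)\bigr)=(\mu\circ\lambda)^{-1}(S)=\lambda^{-1}(S).
\]
Taking $S=\conv(\calC\cap\calK)+\calK^\circ$ and using the previous paragraph gives $\lambda^{-1}(\conv\calC)=\lambda^{-1}\bigl(\conv(\calC\cap\calK)+\calK^\circ\bigr)$, and Theorem~\ref{thm:feasible} (applied to the feasible set $\calC$ in $(\bbV,\bbW,\lambda)$) identifies the right-hand side with $\conv\lambda^{-1}(\calC)$. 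This finishes the proof.

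The main obstacle is essentially bookkeeping rather than depth: one must be careful that Theorem~\ref{thm:main} is legitimately applicable in the reduced system (i.e., that $(\bbW,\bbW,\mu)$ really is a FTvN system with cone exactly $\calK$, which is built into Definition~\ref{def:reduced}), and that the equality $\calC=\mu^{-1}(\calC\cap\calK)$ is justified precisely from Lemma~\ref{lem:spectral_set}; everything after the identity $\mu\circ\lambda=\lambda$ is formal. One could alternatively invoke Corollary~\ref{cor:conv_K^o}(i) with $\calD=(\conv\calC)\cap\calK$, but one would still have to show $\lambda^{-1}\bigl((\conv\calC)\cap\calK+\calK^\circ\bigr)=\lambda^{-1}(\conv\calC)$, which again relies on $\mu\circ\lambda=\lambda$; the route above is the most economical.
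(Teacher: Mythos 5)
Your argument is correct, but it takes a genuinely different route from the paper. The paper proves the two inclusions separately: $\conv\lambda^{-1}(\calC)\subseteq\lambda^{-1}(\conv\calC)$ is quoted from \cite[Theorem~3.2(a)]{Jeong_23}, while the reverse inclusion follows from Corollary~\ref{cor:conv_hull_eq_inv}\ref{item:conv_eq_inv_i} with $\calD=\conv\calC$, i.e.\ ultimately from the invariance inclusion $\calC\subseteq\calC\cap\calK+\calK^\circ$ of Proposition~\ref{prop:C_spec_subseteq} fed into Proposition~\ref{prop:conv_hull_eq2}. You instead establish the equality in one stroke by showing that both sides coincide with $\lambda^{-1}\bigl(\conv(\calC\cap\calK)+\calK^\circ\bigr)$: the left-hand side via Theorem~\ref{thm:feasible} in the ambient system, and the right-hand side by computing $\conv\calC=\mu^{-1}\bigl(\conv(\calC\cap\calK)+\calK^\circ\bigr)$ inside the reduced system and pulling back through the identity $\mu\circ\lambda=\lambda$ (valid since $\ran\lambda=\calK$ and $\mu$ fixes $\calK$). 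Your route buys self-containedness --- no appeal to \cite{Jeong_23} and no use of the Section~\ref{sec:invariant} machinery of Proposition~\ref{prop:C_spec_subseteq} --- at the cost of running the convexification theorem twice (once in each system); the paper's route is shorter because that machinery is developed anyway for Corollaries~\ref{cor:conv_K^o} and~\ref{cor:conv_hull_eq_inv}.

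One bookkeeping correction: with $\calD:=\conv(\calC\cap\calK)$, Theorem~\ref{thm:main} literally gives $\conv\mu^{-1}\bigl(\conv(\calC\cap\calK)\bigr)=\mu^{-1}\bigl(\conv(\calC\cap\calK)+\calK^\circ\bigr)$, whose left-hand side is not verbatim the $\conv\mu^{-1}(\calC\cap\calK)$ you display; bridging the two needs Proposition~\ref{prop:comv_hull_eq}\ref{item:conv_eq_i} in the reduced system. The cleanest fix is to invoke Theorem~\ref{thm:feasible} in $(\bbW,\bbW,\mu)$ with the feasible set $\calC$ (or $\calC\cap\calK$) directly, which yields $\conv\mu^{-1}(\calC)=\mu^{-1}\bigl(\conv(\calC\cap\calK)+\calK^\circ\bigr)$ and, combined with your identity $\calC=\mu^{-1}(\calC\cap\calK)$, the claimed formula for $\conv\calC$. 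This is a citation-level adjustment, not a gap; the remainder of your argument, including $\mu\circ\lambda=\lambda$ and the final application of Theorem~\ref{thm:feasible}, is sound.
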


\begin{proof}
From~\cite[Theorem~3.2(a)]{Jeong_23}, we know that $\conv\lambda^{-1}(\calC) \subseteq \lambda^{-1}(\conv\calC) $. From Corollary~\ref{cor:conv_hull_eq_inv}\ref{item:conv_eq_inv_i}, we also have $\lambda^{-1}(\conv\calC) \subseteq  \conv\lambda^{-1}(\calC)$. 
\end{proof}

\begin{remark}\label{rmk:improve}
Note that for a spectral set $\calC$ in $(\bbW, \bbW , \mu)$, %invariant $\calC$ (i.e., $\calC$ is spectral in $(\bbW, \bbW , \mu)$), 
%a spectral set $\calC$ in $(\bbW, \bbW , \mu)$, 
the result 
$\conv\lambda^{-1}(\calC) =\lambda^{-1}(\conv\calC)$ was proved in~\cite[Theorem~3.2(d)]{Jeong_23}, but under the additional compactness assumption of $\calC$. Without this assumption, only  $\conv\lambda^{-1}(\calC) \subseteq \lambda^{-1}(\conv\calC)$ was obtained in~\cite[Theorem~3.2(a)]{Jeong_23}. In view of this, Corollary~\ref{cor:transfer} indicates that the compactness assumption of $\calC$ is unnecessary, and the ``transfer principle'' always applies to $\conv\lambda^{-1}(\calC)$, as long as $\calC$ is spectral in $(\bbW, \bbW , \mu)$.  %we always ha
\end{remark}

\begin{remark}
Note that the ``transfer principle'' regarding $\clconv\lambda^{-1}(\calC)$ was proved in~\cite[Theorem~3.2(b)]{Jeong_23}, namely $\clconv\lambda^{-1}(\calC) =\lambda^{-1}(\clconv\calC) $. 
As such, for a spectral set $\calC$ in $(\bbW, \bbW , \mu)$, we have two ways to characterize  $\conv\lambda^{-1}(\calC)$ and $\clconv\lambda^{-1}(\calC)$: one via Corollary~\ref{cor:conv_K^o}, and the other via the ``transfer principle''. % mentioned above. 
%both Corollaries~\ref{cor:conv_K^o} and~\ref{cor:transfer} provide 
\end{remark}

%\begin{remark}[Transfer Principle]
%Note that in Proposition~\ref{prop:conv_hull_eq2}\ref{item:conv_eq3_i}, by letting $\calD = \conv \calC$, we have $\lambda^{-1}(\conv\calC) \subseteq  \conv\lambda^{-1}(\calC)$. As 
%\end{remark}

%Proposition~\ref{prop:C_spec_subseteq} %in particular 
%implies that any nonempty spectral set $%\emptyset\ne 
%\calC\subseteq \bbW$  in $(\bbW, \bbW , \mu)$,
\noindent Now we turn to the proof of Proposition~\ref{prop:C_spec_subseteq}. To that end, we need the following lemma. 

\begin{lemma} \label{lem:mu_calC}
Let $%\emptyset\ne 
\calC\subseteq \bbW$ be a nonempty spectral set in $(\bbW, \bbW , \mu)$. Then $\mu(\calC) = \calC\cap\calK$.
\end{lemma}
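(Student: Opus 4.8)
We must show that for a nonempty spectral set $\calC$ in the reduced system $(\bbW,\bbW,\mu)$, the image $\mu(\calC)$ equals $\calC\cap\calK$.

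**The plan.** I would prove the two inclusions separately. For the inclusion $\mu(\calC)\subseteq\calC\cap\calK$: take any $u\in\calC$. Since $\mu$ has range $\calK$, we have $\mu(u)\in\calK$. It remains to see $\mu(u)\in\calC$; this follows from Remark~\ref{rmk:mu(u)_in_[u]}, which gives $\mu(u)\in[u]$ (the $\mu$-orbit of $u$), together with Lemma~\ref{lem:spectral_set}\ref{item:spectral_ii}: since $\calC$ is spectral in $(\bbW,\bbW,\mu)$ and $u\in\calC$, the whole orbit $[u]$ lies in $\calC$, so $\mu(u)\in\calC$. Hence $\mu(u)\in\calC\cap\calK$, proving $\mu(\calC)\subseteq\calC\cap\calK$.

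**The reverse inclusion.** For $\calC\cap\calK\subseteq\mu(\calC)$: take any $u\in\calC\cap\calK$. Since $u\in\calK$ and $\mu$ restricted to $\calK$ is the identity (Definition~\ref{def:reduced}), we have $\mu(u)=u$. Therefore $u=\mu(u)\in\mu(\calC)$, since $u\in\calC$. This gives $\calC\cap\calK\subseteq\mu(\calC)$, and combining the two inclusions finishes the proof.

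**Main obstacle.** Honestly there is no serious obstacle here — the lemma is a direct bookkeeping consequence of the idempotency of $\mu$ on $\bbW$, the fact that $\mu$ fixes $\calK$ pointwise, and the orbit-characterization of spectral sets. The only point requiring a modicum of care is correctly invoking that a spectral set in $(\bbW,\bbW,\mu)$ is a union of $\mu$-orbits (Lemma~\ref{lem:spectral_set} applied to the FTvN system $(\bbW,\bbW,\mu)$ rather than to $(\bbV,\bbW,\lambda)$), so that membership of a single point pulls in its entire $\mu$-orbit; everything else is one-line set manipulation.
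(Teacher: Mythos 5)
Your proposal is correct and follows essentially the same route as the paper's proof: both directions are established exactly as in the paper, using $\mu(u)\in[u]$ (Remark~\ref{rmk:mu(u)_in_[u]}) together with the orbit characterization of spectral sets for the forward inclusion, and the fact that $\mu$ fixes $\calK$ pointwise for the reverse inclusion. Nothing further is needed.
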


\begin{proof}
%It suffices to focus on $\calC\ne\emptyset$. 
For any $u\in \mu(\calC)$, there exists $v\in\calC$ such that $u = \mu(v)$. Since $\calC$ is spectral, we have $ [v]\subseteq \calC$ (cf.~Lemma \ref{lem:spectral_set}\ref{item:spectral_ii}) and by Remark~\ref{rmk:mu(u)_in_[u]}, we have  $\mu(v) \in [v]\subseteq \calC$. Since $\ran\mu=\calK$, we have $u=\mu(v) \in \calC\cap \calK$. Conversely, for any $u\in\calC\cap\calK$, we know that $\mu(u) = u$, and hence $u\in \mu(\calC)$. 
%there exists $v\in\mu^{-1}(\calC)$ such that $\mu(v) = u$, and hence $u \in $
\end{proof}

\begin{proof}[Proof of Proposition~\ref{prop:C_spec_subseteq}]\renewcommand{\qedsymbol}{}
For any $v\in [u]$ and any $w\in\calK$, we have $\mu(v) = \mu(u)$ and 
\begin{equation}
\ipt{w}{v - \mu(u)} \le \ipt{w}{\mu(v)} - \ipt{w}{\mu(u)} = \ipt{w}{\mu(u)} - \ipt{w}{\mu(u)} = 0,
\end{equation}
and hence $v-\mu(u)\in \calK^\circ.$  For any spectral set $\calC$, by Lemma~\ref{lem:spectral_set}\ref{item:spectral_iii} and Lemma~\ref{lem:mu_calC}, we have 
\begin{equation}
\textstyle \calC = \bigcup_{u\in\calC}\, [u]\subseteq  \bigcup_{u\in\calC}\, (\mu(u) + \calK^\circ) = \bigcup_{v\in\mu(\calC)}\, (v + \calK^\circ) = \mu(\calC) + \calK^\circ= \calC\cap\calK + \calK^\circ.\tag*{$\square$} 
\end{equation}
%\noqed
\end{proof}

\vspace{-3ex}

%\noindent 
%From Proposition~\ref{prop:C_spec_subseteq}, we see that if $\calC\subseteq \bbW$ is a nonempty spectral set in $(\bbW, \bbW , \mu)$, then  $\calC$ is feasible and satisfies that $\calC\subseteq \clconv(\calC\cap\calK) + \calK^\circ$. Therefore, we can specialize Corollary~\ref{cor:calC_subseteq} to the following result. %, we have the following result. 

\noindent
{\bf Acknowledgment.} The author sincerely thanks an anonymous reviewer for suggesting a simplified proof of Theorem~\ref{thm:main} during the initial submission of this paper. The author also thanks Casey Garner, M.\ S.\ Gowda, Bruno Lurenco,  Weijun Xie and Shuzhong Zhang for inspiring and helpful discussions during the preparation of this work.

\appendix
 
\section{Proof of Lemma~\ref{lem:spectral_set} }\label{app:proof}

To show \ref{item:spectral_i} $\Rightarrow$ \ref{item:spectral_ii}, let $\calE = \lambda^{-1}(\calC)$ for some feasible $\calC$. If $x\in \calE$, then $\lambda(x)\in\calC$, and hence $[x] = \lambda^{-1}(\lambda(x))\subseteq \lambda^{-1}(\calC) = \calE$. To show \ref{item:spectral_ii} $\Rightarrow$ \ref{item:spectral_iii}, it suffices to show $[\calE]\subseteq \calE$. If $x\in [\calE]$, then there exists $y\in\calE$ such that $x\in[y]$. Since $y\in\calE$, we have $[y]\subseteq \calE$ and hence $x\in\calE$. Note that \ref{item:spectral_iii} $\Rightarrow$ \ref{item:spectral_iv} is trivial. To show \ref{item:spectral_iv} $\Rightarrow$ \ref{item:spectral_i}, let $\calE = \bigcup_{u\in\calD}\,\lambda^{-1}(u)$ for some $\emptyset\ne \calD\subseteq \calK$. Then $\calE = \lambda^{-1}(\calD)$.

\bibliographystyle{IEEEtr}      % mathematics and physical sciences
\bibliography{math_opt}
%\bibliographystyle{spphys}       % APS-like style for physics
%\bibliography{}   % name your BibTeX data base

% Non-BibTeX users please use
%\begin{thebibliography}{math_opt}
%%
%% and use \bibitem to create references. Consult the Instructions
%% for authors for reference list style.
%%
%\bibitem{RefJ}
%% Format for Journal Reference
%Author, Article title, Journal, Volume, page numbers (year)
%% Format for books
%\bibitem{RefB}
%Author, Book title, page numbers. Publisher, place (year)
%% etc
%\end{thebibliography}

\end{document}